\renewcommand{\epsilon}{\varepsilon}
\renewcommand{\setminus}{\smallsetminus}
\theoremstyle{plain}
\newtheorem{corollary}{Corollary}[section]
\newtheorem{lemma}[corollary]{Lemma}
\newtheorem{proposition}[corollary]{Proposition}
\newtheorem{theorem}[corollary]{Theorem}
\theoremstyle{definition}
\theoremstyle{remark}
\newcommand{\Prob}{{\mathbb P}}
\newcommand{\N}{{\mathbb N}} 
\newcommand{\E}{{\mathbb E}}
\newcommand{\R}{{\mathbb R}}
\newcommand{\C}{{\mathbb C}}
\newcommand{\Half}{{\mathbb H}}
\newcommand{\D}{{\mathbb D}}
\newcommand{\1}{{\rm 1\hspace*{-0.4ex}\rule{0.1ex}{1.52ex}\hspace*{0.2ex}}}
\newcommand{\F}{{\mathcal F}}
\newcommand{\exc}{{\mathcal E}}
\newcommand{\bd}{\partial}
\renewcommand{\Im}{{\rm Im}}
\newcommand{\SLE}{{\mathrm{SLE}}}
\newcommand{\mwhere}{{\ \ \text{where} \ \ }}
\newcommand{\rd}{{d}}
\newcommand{\dd}{{\ \rd}}
\newcommand{\adz}{{\ |\rd z|}}
\newcommand{\dt}{{\dd t}}
\newcommand{\dxy}{{\dd x \dd y}}
\newcommand{\ceq}{\;{\mathrel{\mathop:}=}\;}
\newcommand{\limints}{\idotsint\limits}
\newcommand{\Sig}{{\mathcal S}}
\title[Regularity of SLE, crossings and rough paths]{Regularity of Schramm-Loewner Evolutions, annular crossings, and rough path theory}
\author{Brent M. Werness}
\address{Department of Mathematics\\ University of Chicago\\ 5734 University Avenue\\ Chicago, IL 60637-1546}
\email{bwerness@math.uchicago.edu}
\keywords{Schramm-Loewner Evolutions, H\"older regularity, rough path theory, Young integral, signature}
\subjclass[2010]{Primary 60J67; Secondary 60H05}
\begin{document}
\begin{abstract}
	When studying stochastic processes, it is often fruitful to understand several different notions of regularity.  One such notion is the optimal H\"older exponent obtainable under reparametrization.  In this paper, we show that chordal $\SLE_\kappa$ in the unit disk for $\kappa \le 4$ can be reparametrized to be H\"older continuous of any order up to $1/(1+\kappa/8)$. 
	 
	From this, we obtain that the Young integral is well defined along such $SLE_\kappa$ paths with probability one, and hence that $\SLE_\kappa$ admits a path-wise notion of integration.  This allows us to consider the expected signature of $\SLE$, as defined in rough path theory, and to give a precise formula for its first three gradings.
	
	The main technical result required is a uniform bound on the probability that an $\SLE_\kappa$ crosses an annulus $k$-distinct times.
\end{abstract}
\maketitle
\section{Introduction}\label{intro}

Oded Schramm introduced Schramm-Loewener Evolutions ($\SLE$) as a stochastic process to serve as the scaling limit of various discrete models from statistical physics believed to be conformally invariant in the limit \cite{first}.  It has successfully been used to study a number of such processes (for example, loop-erased random walk and uniform spanning-tree \cite{loop}, percolation exploration process \cite{perc}, Gaussian free field interfaces \cite{gff}, and Ising model cluster boundaries \cite{ising1}).  

To define $\SLE$, it is convenient to parametrize the curve so that the half-plane capacity increases linearly and deterministically with time -- a change which allowed the use of a form of the Loewner differential equation.  This has proven an extremely fruitful point of view, enabling the definition of $SLE_\kappa$ and the proof of all of the above convergence results.

However, in doing so, the original parametrizations of the discrete models are lost along with any information about the regularity of these parameterizations.  To try and recover some information on the possible regularity properties of these parametrizations, it is reasonable to ask the question: what are the best regularity properties that $\SLE_\kappa$ curves can have under any \emph{arbitrary} reparametrization?

Regularity of $\SLE_\kappa$ under the capacity parametrization is well understood. In \cite{holder2}, Johansson Viklund and Lawler prove a conjecture of Lind from \cite{holder1} that for chordal $\SLE_\kappa$ parametrized by capacity, the optimal H\"older exponent is
\[
\alpha_0 = \min\Big\{\frac{1}{2}, 1-\frac{\kappa}{24+2\kappa-8\sqrt{8+\kappa}}\Big\}.
\]

However, this value differs greatly from what one might expect.  In \cite{beffara}, Beffara shows that the almost sure Hausdorff dimension of a chordal $\SLE_\kappa$ is $1+\kappa/8$. A $d$-dimensional curve $\gamma$ cannot be reparametrized to be H\"older continuous of any order greater than $1/d$, and intuition from other stochastic processes implies that $\SLE_\kappa$ should be able to be reparametrized to be H\"older continuous of all remaining orders, which is not what we see under capacity parametrization.

In this paper, we answer this question for $\kappa \le 4$ and show that the best possible result is true.
\begin{theorem}\label{regularityTheorem}
	Fix $0 \le \kappa \le 4$ and let $\gamma:[0,\infty] \rightarrow \D$ be a chordal $\SLE_\kappa$ from $1$ to $-1$ in $\D$ and $d = 1 + \kappa/8$ be its almost sure Hausorff dimension.  Then, with probability one the following holds: 
	\begin{itemize}
		\item for any $\alpha < 1/d$, $\gamma$ can be reparametrized as a curve $\tilde\gamma :[0,1] \rightarrow D$ which is be H\"older continuous of order $\alpha$, and 
		\item for any $\alpha > 1/d$, $\gamma$ cannot be reparametrized as a curve $\tilde\gamma :[0,1] \rightarrow D$ which is be H\"older continuous of order $\alpha$.
	\end{itemize}
\end{theorem}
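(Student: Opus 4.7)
The plan is to handle the two bullets separately. For the upper bound (second bullet), I would invoke Beffara's theorem, which states that the trace of $\gamma$ almost surely has Hausdorff dimension exactly $d = 1+\kappa/8$, together with the elementary fact that a H\"older-$\alpha$ image of $[0,1]$ has Hausdorff dimension at most $1/\alpha$. If some reparametrization $\tilde\gamma$ were H\"older of order $\alpha > 1/d$, then its image, which coincides with $\gamma([0,\infty])$, would have Hausdorff dimension at most $1/\alpha < d$, contradicting Beffara.

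The lower bound is the real content. Fix $\alpha < 1/d$ and set $d' = 1/\alpha$, so $d < d' < \infty$. The strategy is to construct a random length-like functional $\ell$ on sub-arcs of $\gamma$ (recall that $\gamma$ is simple since $\kappa \le 4$) and to reparametrize by its cumulative version. Let $\mathcal{D}_n$ denote the standard dyadic grid of squares of side $2^{-n}$, and define for each sub-arc $A$ of $\gamma$
\[
\ell(A) \ceq \sum_{n\ge 0} 2^{-nd'} \cdot \#\{Q \in \mathcal{D}_n : Q \cap A \ne \emptyset\}.
\]
Setting $F(t) = \ell(\gamma|_{[0,t]})$ and using the right-continuous inverse of $F$ as the new time variable defines a candidate reparametrization $\tilde\gamma$.

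Two properties of $\ell$ do all the work. The geometric lower bound is: for any connected sub-arc $A$ of diameter $r$, the projection of $A$ onto a diameter-achieving axis is an interval of length $r$, so $A$ meets at least $\lfloor r\cdot 2^n \rfloor$ squares of $\mathcal{D}_n$ at each scale $2^{-n} \le r/2$; since $d' > 1$, the resulting geometric sum is dominated by its top term and yields $\ell(A) \ge c\, r^{d'}$ for a universal constant $c$. This deterministic bound translates directly into $\diam(\tilde\gamma|_{[s,s']}) \le C(s'-s)^{1/d'} = C(s'-s)^\alpha$ after reparametrization, which is the desired H\"older regularity. The second property, almost-sure finiteness of $\ell(\gamma)$, reduces to proving $N_n(\gamma) \ceq \#\{Q \in \mathcal{D}_n : Q \cap \gamma \ne \emptyset\} = O(2^{nd''})$ almost surely for some $d'' \in (d, d')$, since then $\sum_n 2^{-n(d'-d'')} < \infty$.

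The main obstacle — and the source of the annular crossing estimate advertised in the abstract — is this tail bound on $N_n(\gamma)$. The natural route is to observe that each visit of $\gamma$ to a fresh $2^{-n}$-square $Q$ forces $\gamma$ to cross the annulus surrounding $Q$ at scale $\sim 2^{-n}$, so a uniform estimate of the form
\[
\Prob\bigl(\gamma \text{ crosses } A(z, r, 2r) \text{ at least } k \text{ times}\bigr) \le C q^k, \qquad q < 1,
\]
with $q$ independent of $z$ and $r$, can be combined with a union bound over the $O(2^{2n})$ dyadic centers and a truncation of $k$ at $O(n)$ to yield $N_n(\gamma) \le C(\omega)\, 2^{nd''}$ almost surely for any $d'' > d$. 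The delicate point is establishing the crossing estimate with sharp enough decay: one needs the effective dimension coming out of the union bound (i.e., $\log_2(1/q)$ compared to the annulus ratio) to match Beffara's exponent $d$, rather than some larger value. Once that crossing bound is proved — which is the technical heart of the paper — the construction of $\ell$, the verification of properties (a) and (b), and the passage to the reparametrization $\tilde\gamma$ are all essentially bookkeeping.
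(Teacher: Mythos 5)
Your second bullet is fine: Beffara's theorem plus the fact that an $\alpha$-H\"older image of $[0,1]$ has Hausdorff dimension at most $1/\alpha$ gives the impossibility of reparametrizing above $1/d$, and this is essentially how the paper gets that direction too (via the chain Hausdorff $\le$ box dimension $=$ tortuosity $= \alpha(\gamma)^{-1}$).

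The first bullet has a genuine gap at the sentence ``this deterministic bound translates directly into $\diam(\tilde\gamma|_{[s,s']})\le C(s'-s)^{1/d'}$.'' With $F(t)=\ell(\gamma|_{[0,t]})$ and $\ell(A)=\sum_n 2^{-nd'}\#\{Q: Q\cap A\neq\emptyset\}$, the increment $F(t')-F(t)$ counts only the dyadic squares met by $\gamma[t,t']$ that were \emph{not already met} by $\gamma[0,t]$; it is not bounded below by $\ell(\gamma|_{[t,t']})$ (only above, by subadditivity). A simple curve can make many nearly parallel passes through the same region, so a later sub-arc of diameter $r$ may contribute only $O(1)$ new squares at each scale comparable to $r$, making $F(t')-F(t)$ far smaller than $r^{d'}$ and destroying the H\"older bound. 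This deficit --- the multiplicity with which distinct segments of the curve revisit a ball of radius $r$ --- is exactly the gap between upper box dimension and tortuosity that Aizenman and Burchard isolate, and closing it is not bookkeeping: it requires a bound on the probability of $k$-fold traversals of annuli of \emph{arbitrary} aspect ratio $r/R$ with exponent $\lambda(k)\to\infty$ (the tempered crossing property, Theorem~\ref{ABTheorem}), which is precisely Theorem~\ref{boundTheorem}. Relatedly, you have placed the crossing estimate in the wrong part of the argument: the tail bound $N_n(\gamma)=O(2^{nd''})$ that you want it for follows already from the one-point (Green's function) estimate via a first moment and Borel--Cantelli, and a fixed-ratio bound of the form $Cq^k$ would in any case not suffice to control the revisit multiplicities at the power-law scales $r^{1+\epsilon}\le |y-x|\le r$ where the tempered crossing property is needed. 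If you replace your functional $\ell$ with the counting function $M(\gamma,\ell)$ of minimal partitions into segments of small diameter (the tortuosity), and supply the annulus crossing bound, you recover the paper's route.
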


The critical case of $\alpha = 1/d$ is still open, however it is natural to conjecture that it cannot be reparametrized to be H\"older continuous of this order.  

With this result, we are able to provide a few preliminary results in the rough path theory of $\SLE$.  

First, we obtain a definition integration against a $\SLE_\kappa$ curve.  In particular, this result shows that $\SLE_\kappa$ for $\kappa \le 4$ has finite $d$-variation for some $d < 2$ in the sense used in \cite{rough1} and thus both the Young integral and the integral of Lions as defined in \cite{rough2} give a way of almost surely integrating path-wise along an $\SLE$ curve, and moreover iterating such integrals to provide almost sure existence of differential equations driven by an $\SLE$ curve. 

Second, we provide a partial computation of the expected signature for $\SLE_\kappa$ in the disk.  In rough path theory, the expected signature is a non-commutative power series which is regarded as a kind of non-commutative Laplace transform for paths. It is believed to characterize the measure on the path up to an appropriate sense of equivalence of paths \cite{expectedRough,uniqueSig}.  We provide a computation of the first three gradings of this non-commutative power series.

The main technical tool used to prove these results is a result by Aizenman and Burchard in \cite{AandB} which states, informally, that all that is needed to obtain a certain degree of H\"older regularity in a random curve is a uniform estimate on the probabilities that the curve crosses an annulus $k$ distinct times.  In particular, we obtain the following result for $\SLE_\kappa$ in $\D$.  Let $A_r^R(z_0)$ denote the annulus with inner radius $r$ and outer radius $R$ centered at $z_0$.

\begin{theorem}\label{boundTheorem}
Fix $\kappa \le 4$. For any $k \ge 1$, there exists $c_k$ so that for any $z_0 \in \D$, $r < R$,
\[
\Prob\{\gamma \textrm{ traverses $A_r^R(z_0)$ at least $k$ separate times} \} \le c_{k} \; \Big(\frac{r}{R}\Big)^{\frac{\beta}{2}(\lfloor k/2 \rfloor - 1)}.
\]
\end{theorem}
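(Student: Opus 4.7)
The plan is to iterate the conformal Markov property of $\SLE_\kappa$, extracting a factor of $(r/R)^{\beta/2}$ for each additional re-entry into the inner disk beyond the first. Write $B_\rho = B(z_0,\rho)$ and define stopping times $\sigma_0 = 0$ and, for $j \ge 1$,
\[
\tau_j = \inf\{t \ge \sigma_{j-1} : \gamma(t) \in \overline{B_r}\}, \qquad \sigma_j = \inf\{t \ge \tau_j : \gamma(t) \notin B_R\}.
\]
Since each pair of consecutive annulus traversals corresponds to one re-entry into $\overline{B_r}$, the event that $\gamma$ traverses $A_r^R(z_0)$ at least $k$ times is contained in $\{\tau_{\lceil k/2 \rceil} < \infty\}$. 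The theorem therefore reduces to the uniform conditional estimate
\[
\Prob(\tau_{m+1} < \infty \mid \F_{\sigma_m}) \le C (r/R)^{\beta/2}
\]
on the event $\{\tau_m < \infty\}$, for each $m \ge 1$.

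The core technical input is a uniform hitting estimate for chordal $\SLE_\kappa$ with $\kappa \le 4$: in a simply connected domain $D$ with two marked prime ends, the probability that $\SLE_\kappa$ hits a small ball embedded in the bulk decays like a power of its radius over the conformal radius of $D$ (the Rohde--Schramm / Beffara one-point estimate underlying the Hausdorff-dimension computation). What is actually needed here is a boundary-accessible variant: when the target ball abuts a boundary arc of $D$ at small conformal distance, the hitting exponent improves by a factor of $2$, producing precisely the $\beta/2$ in the statement. This variant can be derived by running the Rohde--Schramm martingale argument after conformally mapping $D$ to $\Half$, where the target ball becomes a half-disk adjacent to $\R$ and standard half-plane hitting bounds apply.

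For the inductive step, condition on $\F_{\sigma_m}$. By the conformal Markov property the future of $\gamma$ is a chordal $\SLE_\kappa$ in the slit domain $D_m = \D \setminus \gamma[0,\sigma_m]$, started from the tip $\gamma(\sigma_m) \in \partial B_R$ and targeted at $-1$. Since $\SLE_\kappa$ is simple for $\kappa \le 4$, the previous traversal segment $\gamma[\tau_m,\sigma_m]$ is a simple arc in $A_r^R$ joining $\partial B_r$ to $\partial B_R$, and it now constitutes part of the boundary of $D_m$. Consequently $B_r$ is boundary-accessible in $D_m$ through this arc, so the boundary-hitting estimate applies. A Koebe-distortion or Beurling-projection argument identifies the conformal distance in $D_m$ from $\gamma(\sigma_m)$ to $B_r$ as comparable to $r/R$, and the desired conditional bound follows.

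The main obstacle is uniformity: the slit domain $D_m$ can be arbitrarily complicated for large $m$, with many prior traversal segments contributing to $\partial D_m$, and one must control the distortion of the uniformizing map of $D_m$ onto $\Half$ independent of that past geometry. The Beurling projection theorem, or equivalently an extremal-length argument applied to the subregion of $A_r^R \cap D_m$ accessible from the tip, provides this uniform control: every additional prior traversal only shrinks the sector through which $\gamma$ could reach $B_r$, improving (never worsening) the hitting bound. Once the conditional estimate is established, iterating it $m-1$ times yields $\Prob(\tau_m < \infty) \le C_m (r/R)^{(m-1)\beta/2}$, and setting $m = \lceil k/2 \rceil$ gives the theorem (with a slightly stronger bound than stated when $k$ is odd).
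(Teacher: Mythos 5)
There is a genuine gap, and it is precisely the difficulty that the paper's machinery is built to overcome. Your inductive step asserts that $\Prob(\tau_{m+1}<\infty\mid\F_{\sigma_m})\le C(r/R)^{\beta/2}$ uniformly for every $m\ge 1$, and you justify the uniformity by claiming that ``every additional prior traversal only shrinks the sector through which $\gamma$ could reach $B_r$, improving (never worsening) the hitting bound.'' This is false: prior traversals can topologically \emph{force} future ones. After several crossings, the slit domain $D_m$ may be configured so that \emph{every} simple path from the tip $\gamma(\sigma_m)$ to $-1$ inside $D_m$ must pass through $\overline{B_r}$, in which case the conditional probability of $\tau_{m+1}<\infty$ is $1$, not small. (See the paper's Figure~1, where the minimal number of remaining crossings climbs to $3$; each of those three crossings, alternating between entering $B_r$ and exiting $B_R$, occurs with conditional probability one.) The hitting estimate you invoke --- in the paper's formulation, the excursion-measure bound of Lemma~\ref{otherLem} --- requires a separation hypothesis: an arc $\xi_1$ must separate the target arc $\xi_2$ from \emph{both} endpoints of the $\SLE$, in particular from $-1$. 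That hypothesis is exactly what fails at forced crossings, so no amount of Beurling or distortion control rescues the uniform bound for all $m$.

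The paper's proof repairs this by introducing the crossing distance $\Delta_t$ (the minimal number of further annulus crossings any continuation to $-1$ must make), proving that $\Delta$ changes by $\pm 1$ at each crossing time, and characterizing the ``times of increase'' as exactly those crossings where the needed separation holds (Lemma~\ref{increaseLemma}); only those crossings are charged the factor $(r/R)^{\beta/2}$ (Proposition~\ref{increaseProp}). A Dyck-path count then shows at least $\lfloor k/2\rfloor$ of the $k$ crossings must be times of increase, which is where the exponent $\frac{\beta}{2}(\lfloor k/2\rfloor-1)$ comes from --- note this is weaker than the $(\lceil k/2\rceil-1)\beta/2$ your (invalid) iteration would give, reflecting that only about half the crossings carry a cost. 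You would also need the paper's separate treatment of the case $-1\in A_r^R(z_0)$, where the parity argument for $\Delta$ breaks down and the annulus must be split in two; your proposal does not address this case.
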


Finally, there has been recent work defining a parametrization of $\SLE$, called the \emph{natural parametrization}, which should be the scaling limit for the parametrizations of the discrete curves (see \cite{nat1,nat2} for the theory in deterministic geometries, and \cite{nat3} for a version in random geometries).  It is conjectured that $\SLE_\kappa$ under the natural parametrization should have the optimal H\"older exponent our result indicates, however our techniques do not immediately illuminate this question.

The paper is organized as follows.  In Section~ \ref{prelims}, we review the basic definition of $\SLE_\kappa$ and introduce the notation used throughout.  Then, in Section~ \ref{regularity}, a brief overview of the regularity results needed from \cite{AandB} is given along with a discussion of their application to $\SLE_\kappa$. Before proving the regularity result, we present our applications by providing a definition of integration against an $\SLE$ path and the computation of the first three gradings of the expected signature in Section~ \ref{integrals}.  Finally, in Section~ \ref{crossings}, we prove the main estimate bounding the probability that an $\SLE_\kappa$ crosses an annulus at least $k$ times.

We consider $0 \le \kappa \le 4$ to be fixed and write $a\ceq 2/\kappa$.  All constants throughout may implicitly depend on $\kappa$.

\section{$\SLE$ definition and notation}\label{prelims}
We first review the definition of chordal $\SLE_\kappa$ in $\Half$.  For a complete introduction to the subject, see, for example, \cite{Lbook,parkcity,Werner}.  For any $\kappa \ge 0$, let $a = 2/\kappa$ and define $g_t(z)$ to be the unique solution to
\[
\bd_t g_t(z) = \frac{a}{g_t(z) + B_t}, \quad g_0(z) = z.
\]
where $B_t$ is a standard Brownian motion.  We refer to this equation as the \emph{chordal Loewner equation} in $\Half$, and the Brownian motion is the \emph{driving function}.

For any $z \in \Half$ this is well defined up to some random time $T_z$.  Let $H_t = \{z \in \Half \mid T_z > t\}$ be the set of points for which the solution is well defined up until time $t$.  The chordal Loewner equation is defined so that $g_t : H_t \rightarrow \Half$ is the unique conformal map from $H_t$ to $\Half$ which fixes infinity with $g_t(z) = z + \frac{at}{z} + O(z^{-2})$ as $z \rightarrow \infty$.

It was shown by Rohde and Schramm in \cite{RS} that for any value of $\kappa \neq 8$ there exists a unique continuous curve $\gamma : [0,\infty) \rightarrow \overline\Half$ such that $H_t = \Half \setminus \gamma[0,t]$.  This holds for $\kappa = 8$ as well, however the proof in this case differs significantly \cite{loop}. This curve is \emph{chordal $\SLE_\kappa$ from $0$ to $\infty$ in $\Half$}.  

To define $\SLE_\kappa$ in other simply connected domains $D$ from $z_1 \in \bd D$ to $z_2 \in \bd D$, let $f:\Half \rightarrow D$ be a conformal map so that $f(0) = z_1$ and $f(\infty) = z_2$, and define $\SLE_\kappa$ in this new domain by taking the image of the curve $\gamma$ under this conformal map.  

In this work, we mainly consider $\SLE_\kappa$ from $1$ to $-1$ in $\D$.  In this case, it is known that $\gamma(\infty^-) = -1$ (see, for example, \cite[Chapter 6]{Lbook} for the proof in $\Half$), and hence we extend $\gamma$ to be well defined on the times $[0,\infty]$.  While this particular choice of domain and boundary points is not required for the work that follows, it is important to choose a domain with sufficiently smooth boundary to avoid detrimental boundary effects (for instance, domains and boundary points without any H\"older continuous curves between them).

\section{Tortuosity, H\"older continuity, and dimension}\label{regularity}
\subsection{Definitions and relations}

To prove the claimed order of continuity, we need to use tools first described by Aizenman and Burchard in \cite{AandB}.  This review is devoid of proofs, which the interested reader may find in the original paper along with many results beyond what are needed in this paper.  To aid in this, we have included the original theorem numbers for each result with each statement.  We begin by describing three different measures of regularity and their deterministic relationships.  Throughout this section $\gamma:[0,1] \rightarrow \R^d$ is a compact continuous curve in $\R^d$.

First, recall that a curve $\gamma(t)$ is \emph{H\"older continuous of order $\alpha$} if there exists a constant $C_\alpha$ such that for all $0 \le s \le t \le 1$, we have that $|\gamma(s) - \gamma(t)| \le C_\alpha|t-s|^\alpha$.  This condition becomes stricter for larger values of $\alpha$, thus if one wants to turn this into a parametrization independent notion of regularity it makes sense to define 
\[
\alpha(\gamma) = \sup \{\alpha \mid \gamma \textrm{ admits an $\alpha$-H\"older continuous reparametization} \}.
\]

While a familiar and useful notion of regularity, it can be hard to work with directly for random curves.  A similar notion, which is more amenable to estimation is the concept of the \emph{tortuosity}.  Let $M(\gamma,\ell)$ denote the minimal number of segments needed to partition the curve $\gamma$ into segments of diameter no greater than $\ell$.  As with most of these dimension like quantities, we wish to understand its power law rate of growth, thus we define the \emph{tortuosity exponent} to be
\[
\tau(\gamma) = \inf \{ s > 0 \mid \ell^s M(\gamma,\ell) \rightarrow 0 \textrm{ as } \ell \rightarrow 0\}.
\]

These two notions are similar in so far as they define a type of regularity for a curve in a local way which is, to a large extent, insensitive to the large scale geometry of the curve.  As such, one should not be surprised that they are deterministically related by the following result.

\begin{theorem}[{\cite[Theorem 2.3]{AandB}}]
For any curve $\gamma:[0,1]\rightarrow \R^d$, 
\[
\tau(\gamma) = \alpha(\gamma)^{-1}.
\]
\end{theorem}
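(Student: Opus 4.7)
The plan is to prove the two inequalities $\tau(\gamma) \le \alpha(\gamma)^{-1}$ and $\tau(\gamma) \ge \alpha(\gamma)^{-1}$ separately. The upper bound is straightforward: fix $\alpha < \alpha(\gamma)$ and let $\tilde\gamma:[0,1]\to\R^d$ be an $\alpha$-H\"older reparametrization with constant $C$. Partitioning the parameter interval into $N$ equal subintervals decomposes $\gamma$ into $N$ pieces of diameter at most $CN^{-\alpha}$. Choosing $N = \lceil (C/\ell)^{1/\alpha}\rceil$ gives $M(\gamma,\ell) \le 2(C/\ell)^{1/\alpha}$, so $\ell^s M(\gamma,\ell) \to 0$ for every $s > 1/\alpha$ and hence $\tau(\gamma) \le 1/\alpha$; letting $\alpha \nearrow \alpha(\gamma)$ yields the desired inequality.

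For the reverse direction $\tau(\gamma) \ge \alpha(\gamma)^{-1}$ I would build an explicit reparametrization. Fix $s > \tau(\gamma)$ and $\epsilon > 0$. By definition of $\tau$, for every $n$ large enough there is a partition $\pi_n$ of $[0,1]$ into $M_n \le C 2^{ns}$ intervals $I_j^n$ whose $\gamma$-images each have diameter at most $2^{-n}$. On each such $I$ place uniform mass $c_n \ceq 2^{-n(s+\epsilon)}$, obtaining a measure $\mu_n$ of total mass $M_n c_n \le C \cdot 2^{-n\epsilon}$, and set $\mu \ceq \sum_n \mu_n$, a finite atomless measure of full support on $[0,1]$. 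Then $\phi(t) \ceq \mu([0,t])$ is a homeomorphism of $[0,1]$ onto $[0,\mu([0,1])]$, and $\tilde\gamma \ceq \gamma\circ\phi^{-1}$ is a continuous reparametrization of $\gamma$.

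The key geometric fact is that $|\gamma(s)-\gamma(t)| > 2^{-n}$ forces $[s,t]$ to contain an entire piece of $\pi_{n+1}$: if $s \in I_j^{n+1}$ and $t \in I_{j'}^{n+1}$ with $j \le j'$, the case $j' \le j+1$ is excluded because the shared endpoint $t_{j+1}^{n+1}$ would satisfy $|\gamma(s)-\gamma(t_{j+1}^{n+1})|,\ |\gamma(t_{j+1}^{n+1})-\gamma(t)| \le 2^{-n-1}$, contradicting the triangle inequality. Hence $j' \ge j+2$, so $[s,t] \supseteq I_{j+1}^{n+1}$ and $\mu([s,t]) \ge c_{n+1}$. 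Applying this with $n$ the smallest integer satisfying $2^{-n} < |\gamma(s)-\gamma(t)|$ yields $\mu([s,t]) \ge c \cdot |\gamma(s)-\gamma(t)|^{s+\epsilon}$, which rearranges to $|\tilde\gamma(\phi(s)) - \tilde\gamma(\phi(t))| \le C |\phi(s)-\phi(t)|^{1/(s+\epsilon)}$. Sending $s \searrow \tau(\gamma)$ and $\epsilon \searrow 0$ establishes $\alpha(\gamma) \ge 1/\tau(\gamma)$.

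The main obstacle is calibrating the weights $c_n$ so that $\mu$ satisfies two competing requirements across all scales at once: the $c_n$ must decay fast enough for $\sum_n M_n c_n$ to converge, yet stay large enough to force the lower bound $\mu([s,t]) \ge c\,|\gamma(s)-\gamma(t)|^{1/\alpha}$ at every pair $s,t$. The small cushion $\epsilon$ reconciles these constraints and explains why the construction yields H\"older reparametrizations of every order strictly below $1/\tau(\gamma)$ rather than at the critical exponent itself.
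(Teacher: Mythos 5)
The paper does not prove this statement: it is quoted verbatim from Aizenman--Burchard (their Theorem 2.3), and the surrounding text explicitly says the review is ``devoid of proofs.'' So there is no in-paper argument to compare against; judged on its own, your proof is correct and is essentially the standard argument from that reference. The easy direction (equipartition of the parameter interval of a H\"older reparametrization) is fine, and the harder direction --- building a finite atomless measure $\mu=\sum_n\mu_n$ from the partitions witnessing the tortuosity bound and reparametrizing by $\phi(t)=\mu([0,t])$ --- is the right construction; the adjacency argument showing $|\gamma(s)-\gamma(t)|>2^{-n}$ forces $[s,t]$ to swallow a full interval of $\pi_{n+1}$, hence $\mu([s,t])\ge c_{n+1}$, is the crux and you have it right. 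Two cosmetic points to clean up: you use the letter $s$ simultaneously for the exponent and for a time, and you should say a word about the finitely many large scales $n<n_0$ where the partitions $\pi_n$ with the bound $M_n\le C2^{ns}$ need not yet exist (there the H\"older inequality holds trivially after enlarging the constant, since $\mu([s,t])\ge c_{n_0}$ whenever $|\gamma(s)-\gamma(t)|$ exceeds a fixed threshold and $\gamma$ has bounded diameter). Neither affects the validity of the argument.
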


Often times, it is easier still to estimate a quantity which takes global geometry in to account, in particular we discuss the \emph{upper box dimension}.  Let $N(\gamma,\ell)$ denote the minimal number of sets of diameter at most $\ell$ needed to cover $\gamma$.  Then the upper box dimension is
\[
\dim_B(\gamma) = \inf \{ s > 0 \mid \ell^s N(\gamma,\ell) \rightarrow 0 \textrm{ as } \ell \rightarrow 0\}.
\]

The upper box dimension can differ quite markedly from the tortuosity exponent as a single set in the cover can contain a large number of different segments of $\gamma$ of similar diameter.  In fact, there exist curves in the plane which cannot be parametrized to be H\"older continuous of any order, and hence $\tau(\gamma) = \infty$, while $\dim_B(\gamma) \le d$ for any compact curve $\gamma:[0,1]\rightarrow \R^d$.  In general, it is immediate from the definitions that $\dim_B(\gamma) \le \tau(\gamma)$, however the inequality can be strict.

What is desired is a condition which deterministically ensures that the upper box dimension and the tortuosity exponent coincide, allowing us to control the optimal H\"older exponent with the upper box dimension.

Aizenman and Burchard provide such a property which they refer to as the \emph{tempered crossing property}.  Say that a curve $\gamma$ \emph{exhibits a $k$-fold crossing of power $\epsilon$ at the scale $r \le 1$} if it traverses some spherical shell of the form
\[D(x;r^{1+\epsilon},r) \ceq \{y \in \R^d \mid r^{1+\epsilon} \le |y-x| \le r\}.
\]  
With this, we say a curve has the tempered crossing property if for every $0 < \epsilon < 1$ there exists $k(\epsilon)$ and $0 < r_0(\epsilon) < 1$ such that on scales smaller than $r_0(\epsilon)$, the curve has no $k(\epsilon)$-fold crossings of power $\epsilon$.

\begin{theorem}[{\cite[Theorem 2.5]{AandB}}]
If $\gamma:[0,1]\rightarrow \R^d$ has the tempered crossing property,
\[
\tau(\gamma) = \dim_B(\gamma).
\]
\end{theorem}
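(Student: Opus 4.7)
The trivial half is immediate: every partition of $\gamma$ into sub-arcs of diameter at most $\ell$ is a cover by sets of diameter at most $\ell$, so $N(\gamma,\ell) \le M(\gamma,\ell)$ and hence $\dim_B(\gamma) \le \tau(\gamma)$ holds unconditionally. The substance of the theorem is the reverse inequality, which is where the tempered crossing property enters. My plan is to convert an (almost) optimal cover at a small scale into a partition at a moderately larger scale, with only a bounded combinatorial blow-up controlled by the number $k(\epsilon)$ supplied by the hypothesis.

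Fix $0 < \epsilon < 1$ and take $\ell < r_0(\epsilon)$. Choose a cover $\{U_i\}_{i=1}^N$ of $\gamma$ achieving $N = N(\gamma, \ell^{1+\epsilon})$, with $U_i \subset B(x_i, \ell^{1+\epsilon})$. Build a partition greedily: start with $t_0 = 0$, and for each $j$ pick any index $i(j)$ with $\gamma(t_j) \in U_{i(j)}$, then set $t_{j+1}$ to be the first time $t > t_j$ at which $\gamma$ exits $B(x_{i(j)}, \ell)$, or $t_{j+1} = 1$ if no such exit occurs. Every sub-arc $\gamma([t_j, t_{j+1}])$ is contained in $\overline{B(x_{i(j)},\ell)}$ and so has diameter at most $2\ell$, and the procedure terminates because each step strictly advances the curve by at least $\ell - \ell^{1+\epsilon}$.

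The key combinatorial estimate is that no index $i$ can be selected more than $k(\epsilon)$ times. Suppose $i(j_1) = \cdots = i(j_m) = i$ with $j_1 < \cdots < j_m$. For every $s < m$, the curve lies on $\partial B(x_i, \ell)$ at time $t_{j_s+1}$ (by the exit-time definition) and back inside $B(x_i,\ell^{1+\epsilon})$ at time $t_{j_{s+1}}$ (since $\gamma(t_{j_{s+1}}) \in U_i$), so over the disjoint time intervals $(t_{j_s+1},t_{j_{s+1}})$ the curve accumulates at least $m-1$ distinct traversals of the shell $D(x_i; \ell^{1+\epsilon}, \ell)$. The tempered crossing property at scale $\ell$ therefore forces $m-1 \le k(\epsilon)-1$, and summing over $i$ gives
\[
M(\gamma, 2\ell) \le k(\epsilon)\, N(\gamma, \ell^{1+\epsilon}).
\]

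The theorem now falls out by power counting. If $s > \dim_B(\gamma)$ then $\ell^{s(1+\epsilon)}\, N(\gamma,\ell^{1+\epsilon}) \to 0$, and the displayed inequality then yields $(2\ell)^{s(1+\epsilon)}\, M(\gamma,2\ell) \to 0$, whence $\tau(\gamma) \le s(1+\epsilon)$; letting $s \downarrow \dim_B(\gamma)$ and $\epsilon \downarrow 0$ closes the argument. The only genuinely delicate point in this plan is the crossing count: one must choose the nested radii to match (inner radius $\ell^{1+\epsilon}$ equal to the cover scale, outer radius $\ell$ equal to half the partition scale) and then verify that consecutive visits to a single $U_i$ really produce disjoint annulus traversals, which is handled cleanly by the pairwise disjointness of the intervals $(t_{j_s+1}, t_{j_{s+1}})$.
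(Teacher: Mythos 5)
Your argument is correct. Note that the paper itself does not prove this statement --- it is quoted verbatim from Aizenman--Burchard \cite{AandB} and the surrounding review is explicitly ``devoid of proofs'' --- so the only meaningful comparison is with the original source, and your proof is essentially their argument: the trivial inequality $N(\gamma,\ell)\le M(\gamma,\ell)$ gives $\dim_B\le\tau$, and the reverse inequality comes from converting a cover at scale $\ell^{1+\epsilon}$ into a partition at scale $2\ell$, with the multiplicity of each covering set bounded by $k(\epsilon)$ because $m$ returns to a set $U_i\subset B(x_i,\ell^{1+\epsilon})$ after exits from $B(x_i,\ell)$ force $m-1$ disjoint traversals of the shell $D(x_i;\ell^{1+\epsilon},\ell)$. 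The only points left implicit --- that termination of the greedy scheme uses uniform continuity of $\gamma$ to turn the spatial advance $\ell-\ell^{1+\epsilon}$ into a uniform lower bound on $t_{j+1}-t_j$, and that for $s<m$ the exit at $t_{j_s+1}$ genuinely occurred since the procedure continued --- are routine and do not constitute gaps.
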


Thus, the goal is to find a simple probabilistic condition which ensures that with probability one the tempered crossing property holds.  We present a weaker form of the theorem than is found in \cite{AandB}.

\begin{theorem}[{\cite[Lemma 3.1]{AandB}}]\label{ABTheorem}
	Let $\gamma:[0,1] \rightarrow \Lambda$ be a random curve contained in some compact set $\Lambda \subseteq \R^d$.  If for all $k$ there exists $c_k$ and $\lambda(k)$ so that for all $x \in \Lambda$ and all $0 < r \le R \le 1$ we have
	\[
	\Prob\{\gamma \textrm{ traverses $D(x;r,R)$ at least $k$ separate times} \} \le c_k \; \Big(\frac{r}{R}\Big)^{\lambda(k)}
	\]
	where additionally $\lambda(k) \rightarrow \infty$ as $k\rightarrow \infty$, then the tempered crossing probability holds almost surely, and hence
	\[
	\dim_B(\gamma) = \tau(\gamma) = \alpha(\gamma)^{-1}.
	\]
\end{theorem}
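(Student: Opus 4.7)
The plan is to invoke the two theorems stated just above: once the tempered crossing property is established almost surely, the first gives $\tau(\gamma) = \dim_B(\gamma)$, and the deterministic relation $\tau(\gamma) = \alpha(\gamma)^{-1}$ then completes the chain. So the real content is to deduce the tempered crossing property from the multi-crossing probability hypothesis via a Borel--Cantelli argument.

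Fix a rational $\epsilon \in (0,1)$ and set $r_n = 2^{-n}$. For each $n$, choose a maximal $r_n$-separated net $\{x_j^{(n)}\}_{j}$ in the compact set $\Lambda$, of cardinality $O(r_n^{-d})$. The hypothesis applied to $(r,R) = (r_n^{1+\epsilon}, r_n)$ gives
\[
\Prob\{\gamma \text{ traverses } D(x_j^{(n)}; r_n^{1+\epsilon}, r_n) \text{ at least } k \text{ times}\} \le c_k\, r_n^{\epsilon \lambda(k)}.
\]
A union bound over the $O(r_n^{-d})$ net points yields probability at most $C_k\, 2^{-n(\epsilon \lambda(k) - d)}$ that any net-centered dyadic annulus of aspect ratio $r_n^{\epsilon}$ is crossed $k$ times. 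Since $\lambda(k) \to \infty$, choose $k=k(\epsilon)$ large enough that $\epsilon \lambda(k) > d$; the resulting series in $n$ is then geometrically summable and Borel--Cantelli yields that almost surely only finitely many $n$ carry such a bad net-centered crossing.

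The passage to arbitrary centers and scales is a routine discretization: given $x \in \Lambda$ and $r \in [r_n, r_{n-1}]$, take the nearest net point $x'$, so $|x-x'| \le r_n$; a $k$-fold crossing of $D(x; r^{1+\epsilon}, r)$ then induces a $k$-fold crossing of a comparable annulus $D(x'; C_1 r_n^{1+\epsilon}, C_2 r_n)$, whose probability the hypothesis bounds by the same power $r_n^{\epsilon \lambda(k)}$ up to the multiplicative constant $(C_1/C_2)^{\lambda(k)}$, which depends on $k$ but not on $n$ and is absorbed into $c_k$. This gives the tempered crossing property at the chosen rational $\epsilon$ with probability one. To finish, intersect over a countable sequence of rationals $\epsilon_m \downarrow 0$; the monotonicity observation that for $\epsilon_2 > \epsilon_1$ any $k$-fold crossing of the thicker annulus $D(x; r^{1+\epsilon_2}, r)$ contains a $k$-fold crossing of $D(x; r^{1+\epsilon_1}, r)$ (the curve must pass through the radius-$r^{1+\epsilon_1}$ sphere en route) automatically upgrades the property at each $\epsilon_m$ to every real $\epsilon \ge \epsilon_m$, covering all of $(0,1)$ simultaneously.

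The main technical nuisance is the center/scale discretization, where one has to verify that the perturbed inner-to-outer ratio in $D(x'; C_1 r_n^{1+\epsilon}, C_2 r_n)$ does not destroy summability; this reduces to noting that $(C_1/C_2)^{\lambda(k)}$ is a $k$-dependent but $n$-independent constant and so can be harmlessly absorbed into $c_k$. Everything else is standard Borel--Cantelli bookkeeping combined with the two preceding structural theorems, and uses the divergence $\lambda(k) \to \infty$ only qualitatively, so no quantitative rate is required of the hypothesis.
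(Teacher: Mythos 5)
The paper itself gives no proof of this statement---it is imported verbatim from Aizenman and Burchard \cite[Lemma 3.1]{AandB}, with the explicit remark that the review is ``devoid of proofs''---so your attempt must be measured against the original argument. Your overall strategy is exactly that one: reduce to establishing the tempered crossing property almost surely, prove it at dyadic scales and netted centers by a union bound plus Borel--Cantelli (using $\lambda(k)\to\infty$ only to beat the polynomial entropy of the net), and upgrade from a countable sequence $\epsilon_m \downarrow 0$ to all $\epsilon$ by the monotonicity of crossings in the shell's thickness. The reduction to the two preceding deterministic theorems and the monotonicity step are both correct.

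However, the discretization step fails as written. You take a maximal $r_n$-separated net, so the nearest net point $x'$ to a given center $x$ satisfies only $|x - x'| \le r_n$, while the shell in question has outer radius $r \in [r_n, 2r_n]$. For every $k$-fold traversal of $D(x; r^{1+\epsilon}, r)$ to force a $k$-fold traversal of $D(x'; a', b')$ you need the containments $B(x, r^{1+\epsilon}) \subseteq B(x', a')$ and $B(x', b') \subseteq B(x, r)$, i.e. $a' \ge r^{1+\epsilon} + |x-x'|$ and $b' \le r - |x-x'|$. With $|x - x'|$ as large as $r_n$ this forces $a' \ge r_n + r^{1+\epsilon} > r_n \ge r - r_n \ge b'$ for every $r$ in the dyadic block: the inner radius exceeds the outer radius, so no nondegenerate shell $D(x'; C_1 r_n^{1+\epsilon}, C_2 r_n)$ with the claimed property exists---the shift permitted by your net is comparable to the outer radius and swallows the whole annulus. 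The fix is standard: the mesh must be tied to the \emph{inner} radius, e.g. spacing $r_n^{1+\epsilon}/4$. Then $a' \le 5\, r_n^{1+\epsilon}$ and $b' \ge r_n/2$, so the aspect ratio is still $O(r_n^{\epsilon})$, while the net cardinality grows only polynomially, $O\big(r_n^{-(1+\epsilon)d}\big)$; the union bound becomes $c_k'\, 2^{-n(\epsilon\lambda(k) - (1+\epsilon)d)}$, and summability is restored by choosing $k$ with $\epsilon\lambda(k) > (1+\epsilon)d$, which $\lambda(k) \to \infty$ permits. With that correction your argument goes through and coincides with the source's proof.
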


\subsection{$\SLE$ specific bounds}

We need two ingredients to apply the techniques of the previous section to $\SLE_\kappa$ and obtain Theorem~ \ref{regularityTheorem}.  

First, we need to prove that $\SLE_\kappa$ from $1$ to $-1$ in $\D$ satisfies the conditions of Theorem~ \ref{ABTheorem}.  This is the main work of this paper and the result, Theorem~ \ref{boundTheorem}, is proven in Section~ \ref{crossings}.  This estimate shows that for any $\kappa \le 4$ the tempered crossing property holds with probability one, and hence $\alpha(\gamma)^{-1} = \dim_B(\gamma)$.  Note that the condition that the curve has a finite parametrization is immaterial since we may turn the normally infinite parametrization of and $\SLE_\kappa$ curve to a finite one by precomposing by an appropriate function.

Second, we need to know the upper box dimension is $1+\kappa/8$ with probability one.  This is a consequence of a pair of well known results.  From \cite[Theorem 8.1]{RS} we obtain that the upper box dimension is bounded above by the desired value, while the lower bound can be obtained by noting the Hausdorff dimension is $1+\kappa/8$ almost surely (proven by Beffara in \cite{beffara}) and using that the Hausdorff dimension is a lower bound for the box dimension.

\section{Integrals and rough path theory}\label{integrals}
Before proving our regularity result, we discuss a few applications to integration along $\SLE$ paths and the rough path theory of $\SLE$.

\subsection{$d$-variation and integrals}

With the main regularity result, we may prove the existence of integrals of the form 
\[
\int_0^t f(s) \dd \gamma(s)
\]
when $\gamma$ is an $\SLE_\kappa$ and $f$ is a sufficiently nice function.  In particular, both the Young integral (as first defined in \cite{Young}, and used in rough path theory) and the integral defined by Lions in \cite{rough2} are well defined with probability one for $\SLE_\kappa$ with $\kappa \le 4$.  For simplicity we discuss only the Young integral -- checking the condition Lions' integral is similar.

Given a continuous curve $\gamma : [0,1] \rightarrow \R^d$, let $\|\gamma\|_p$ denote the $p$-variation of $\gamma$, defined as
\[
\|\gamma\|_p = \left[\sup_{\mathcal{P}} \sum_{i = 1}^{\#\mathcal{P}} |\gamma(t_i) - \gamma(t_{i-1})|^p\right]^{1/p}
\]
where the supremum is taken over partitions $\mathcal{P} = \{t_0, \ldots, t_n\}$ of $[0,1]$.  This notion of $p$-variation is not the one most commonly used elsewhere in probability which would have $\limsup_{|\mathcal{P}|\rightarrow 0}$ in place of the supremum where $|\mathcal{P}|$ is the mesh of the partition. Let $\mathcal{V}^p(\R^d)$ denote the set of all $\gamma:[0,1]\rightarrow\R^d$ with finite $p$-variation.

It is immediate from the definitions that if $\gamma : [0,1] \rightarrow \R^d$ is H\"older continuous of order $1/p$, then it is an element of $\mathcal{V}^p(\R^d)$.  Thus for $\SLE_\kappa$ in $\D$ from $1$ to $-1$, the main regularity result implies that a sample path $\gamma$ has finite $p$ variation for all $p > 1+\kappa/8$ with probability one.

The following theorem contains the definition of the Young integral (for a proof see, for example, \cite[Theorem 1.16]{rough1}).

\begin{theorem}\label{youngThm}
	Fix $p,q > 0$ such that $1/p + 1/q > 1$.  Take $f \in \mathcal{V}^q(\R)$, and $g \in \mathcal{V}^p(\R^d)$, then for every $t \in [0,1]$, we have
	\[
	\int_0^t f(s) \dd g(s) \ceq \lim_{|\mathcal{P}| \rightarrow 0} \sum_{j = 1}^{\#\mathcal{P}} f(t_j)(g(t_j) - g(t_{j-1})),
	\]
	where $\mathcal{P}$ is a partition of $[0,t]$, exists and is called the \emph{Young integral}.  Moreover, when considered as a function of $t$, the Young integral is an element of $\mathcal{V}^p(\R^d)$ and the integral depends continuously on $f$ and $g$ under their respective norms.
\end{theorem}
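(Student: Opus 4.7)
The plan is to construct the Young integral as the limit of right-endpoint Riemann sums $S(\mathcal{P}) \ceq \sum_{j=1}^{\#\mathcal{P}} f(t_j)(g(t_j) - g(t_{j-1}))$ by showing these sums form a Cauchy net as $|\mathcal{P}| \to 0$, and then bootstrapping the resulting estimates to obtain the $p$-variation and continuity claims. The approach is classical and goes back to Young.

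The technical heart is the \emph{superfluous point lemma}. Given any partition $\mathcal{P} = \{0 = t_0 < \cdots < t_n = t\}$ with $n \geq 2$, removing an interior point $t_j$ alters $S(\mathcal{P})$ by exactly $[f(t_j) - f(t_{j+1})](g(t_j) - g(t_{j-1}))$. Using that $\sum_j |f(t_{j+1}) - f(t_j)|^q \le \|f\|_q^q$ and $\sum_j |g(t_j) - g(t_{j-1})|^p \le \|g\|_p^p$, a pigeonhole argument combined with H\"older's inequality yields some interior index $j$ satisfying
\[
|f(t_j) - f(t_{j+1})| \cdot |g(t_j) - g(t_{j-1})| \le K \, (n-1)^{-(1/p + 1/q)} \|f\|_q \|g\|_p.
\]
The hypothesis $1/p + 1/q > 1$ enters exactly here, through the summability of $\sum_n n^{-(1/p+1/q)}$, which is what makes the iterated removal below convergent.

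Iterating this removal from any refinement $\mathcal{P}' \supset \mathcal{P}$ back down to $\mathcal{P}$, always at the cheapest available interior index, produces a telescoping estimate
\[
|S(\mathcal{P}) - S(\mathcal{P}')| \le C(p,q) \|f\|_q \|g\|_p.
\]
Comparing two arbitrary partitions through their common refinement, and localizing the bound above to subintervals whose $p$- and $q$-variations vanish with the mesh, upgrades this uniform estimate into a genuine Cauchy criterion; the limit is the Young integral.

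For the regularity and continuity conclusions, I would reapply the same Young bound on each subinterval of a partition: estimating $\bigl|\int_{t_{i-1}}^{t_i} f \, dg\bigr|^p$ and summing over $i$ gives the claimed $p$-variation control, while bilinearity in $(f,g)$ together with the same inequality yields continuous dependence in the respective variation norms. The main obstacle is executing the pigeonhole inside the superfluous point lemma: the exponents $p$ and $q$ must be balanced to extract precisely the decay $(n-1)^{-(1/p+1/q)}$, and absolute constants need to be tracked through the iteration so that they do not blow up.
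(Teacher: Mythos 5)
This is a quoted result: the paper offers no proof of Theorem~\ref{youngThm} and simply refers the reader to \cite[Theorem 1.16]{rough1}, and your sketch is the classical Young--L\'ove argument (superfluous-point removal, pigeonhole giving the $(n-1)^{-(1/p+1/q)}$ decay, summation of $\zeta(1/p+1/q)$ over the iterated removals, then localization to subintervals for the Cauchy criterion), which is exactly the proof given in that reference. The sketch is sound as far as it goes; the only points you gloss over are the standard ones (the localization step needs superadditivity of $\|\cdot\|_p^p$ over subintervals together with continuity of $g$ so that the subinterval variations vanish with the mesh, and in full generality the mesh limit requires $f$ and $g$ to share no discontinuities), neither of which is an obstacle in the continuous setting the paper uses.
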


Thus, we may integrate any element of $\mathcal{V}^q(\R)$ for some $q < (8+\kappa)/\kappa$ against an $\SLE_\kappa$ sample path with probability one.  This includes functions such as Lipshitz functions of $\gamma$ itself (since $1+\kappa/8 < (8+\kappa)/\kappa < 2$ when $\kappa < 8$). Thus, via Picard iteration, we may define ordinary differential equations driven by $\SLE_\kappa$ in a path-wise manner (see \cite{rough1} for details of the general theory).

When working with these integrals, one often wants to apply results from standard calculus.  Luckily, this may frequently  be done using the following density result.  Given a curve $\gamma:[0,1] \rightarrow \R^d$ and a partition $\mathcal{P} = \{0=t_0,t_1, \ldots, t_n=1\}$ of $[0,1]$, let $\gamma^{\mathcal{P}}$ be the piecewise linear approximation to $\gamma$ obtained by linearly interpolating between $\gamma(t_i)$ and $\gamma(t_{i+1})$ for each $i$.

\begin{proposition}[{\cite[Proposition 1.14]{rough1}}]\label{approxPath}
Let $p$ and $q$ be such that $1 \le p < q$ and take $\gamma \in \mathcal{V}^p(\R^d)$.  Then, $\gamma^{\mathcal{P}}$ tends to $\gamma$ in $\mathcal{V}^q$ norm as the mesh of $\mathcal{P}$ tends to zero.  Additionally, this convergence may be taken simultaneously in supremum norm.
\end{proposition}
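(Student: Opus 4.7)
The plan is to interpolate between sup-norm and $p$-variation to get $q$-variation, exploiting the gap $q > p$ together with continuity of $\gamma$. The central inequality is that, for any continuous $\eta:[0,1]\to\R^d$ of finite $p$-variation and $q > p$,
\[
\|\eta\|_q^q \le (2\|\eta\|_\infty)^{q-p}\|\eta\|_p^p,
\]
obtained by inserting $|\eta(s_j)-\eta(s_{j-1})|^{q-p} \le (2\|\eta\|_\infty)^{q-p}$ into each summand of a $q$-variation sum and then passing to the supremum over partitions. Applying this to $\eta = \gamma - \gamma^{\mathcal{P}}$ reduces the proposition to two separate statements: (i) $\|\gamma-\gamma^{\mathcal{P}}\|_\infty \to 0$ as $|\mathcal{P}|\to 0$, and (ii) $\|\gamma-\gamma^{\mathcal{P}}\|_p$ remains uniformly bounded in $\mathcal{P}$.

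Claim (i) is essentially uniform convergence of $\gamma^{\mathcal{P}}$ to $\gamma$, which is immediate from uniform continuity of $\gamma$ on the compact interval $[0,1]$: for $s \in [t_i,t_{i+1}]$, the linear interpolant $\gamma^{\mathcal{P}}(s)$ lies on the chord from $\gamma(t_i)$ to $\gamma(t_{i+1})$, so $|\gamma(s) - \gamma^{\mathcal{P}}(s)|$ is bounded by twice the oscillation of $\gamma$ on $[t_i,t_{i+1}]$, which tends to zero uniformly in $i$ as $|\mathcal{P}|\to 0$. This estimate simultaneously supplies the supremum-norm convergence assertion of the proposition.

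For (ii), by the triangle inequality it suffices to bound $\|\gamma^{\mathcal{P}}\|_p$ in terms of $\|\gamma\|_p$. I would, for a given partition $\mathcal{Q}$ of $[0,1]$, group the subintervals $[s_{j-1},s_j]$ of $\mathcal{Q}$ by which $\mathcal{P}$-segments they meet. Contributions from subintervals entirely inside some $[t_i,t_{i+1}]$ take the form $|\gamma(t_{i+1})-\gamma(t_i)|^p \sum_j \lambda_{i,j}^p$ with $\lambda_{i,j}\in[0,1]$ summing to at most $1$, which for $p \ge 1$ is bounded by $|\gamma(t_{i+1})-\gamma(t_i)|^p$; subintervals of $\mathcal{Q}$ that straddle breakpoints of $\mathcal{P}$ should be split at those breakpoints and reorganized. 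Summing over $i$ then yields $\|\gamma^{\mathcal{P}}\|_p \le \|\gamma\|_p$, and hence $\|\gamma - \gamma^{\mathcal{P}}\|_p \le 2\|\gamma\|_p$ uniformly in $\mathcal{P}$.

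The main obstacle is the bookkeeping in step (ii), particularly for the subintervals of $\mathcal{Q}$ that span multiple breakpoints of $\mathcal{P}$. One cannot simply refine $\mathcal{Q}$ to include $\mathcal{P}$ and invoke monotonicity, because for $p>1$ refining a non-collinear piecewise-linear path can actually decrease the $p$-variation sum. The argument must instead exploit the linearity of $\gamma^{\mathcal{P}}$ within each $\mathcal{P}$-segment together with $p \ge 1$ through a careful combinatorial decomposition (or, equivalently, through a control-function formulation). Once this uniform $p$-variation bound is established, the remainder of the proof is a direct application of the interpolation inequality above.
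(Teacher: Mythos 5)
The paper offers no proof of this proposition --- it is imported verbatim from \cite[Proposition 1.14]{rough1} --- so there is no internal argument to compare yours against; I can only judge the proposal on its own terms. Your strategy is the standard one and it is sound. The interpolation inequality $\|\eta\|_q^q \le (2\|\eta\|_\infty)^{q-p}\|\eta\|_p^p$ is correct (insert $|\eta(s_j)-\eta(s_{j-1})|^{q-p}\le(2\|\eta\|_\infty)^{q-p}$ termwise in a $q$-variation sum, then take the supremum; this uses $q\ge p$). Step (i) is immediate from uniform continuity exactly as you describe, and it also delivers the supremum-norm clause of the proposition. Combined with a uniform bound on $\|\gamma-\gamma^{\mathcal{P}}\|_p$, the interpolation inequality gives $\|\gamma-\gamma^{\mathcal{P}}\|_q\to 0$.

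The one step you have not actually carried out is the uniform bound $\|\gamma^{\mathcal{P}}\|_p\le C\|\gamma\|_p$, and you are right both that this is where the work lies and that one cannot simply pass from a test partition $\mathcal{Q}$ to $\mathcal{Q}\cup\mathcal{P}$, since refinement is not monotone for $p>1$. Two standard ways to close it. (a) Convexity: for fixed $\mathcal{Q}$, let $s_j$ be a point lying strictly inside a $\mathcal{P}$-segment; as $s_j$ varies there, $\gamma^{\mathcal{P}}(s_j)$ traverses a line segment, so $|\gamma^{\mathcal{P}}(s_j)-\gamma^{\mathcal{P}}(s_{j-1})|^p+|\gamma^{\mathcal{P}}(s_{j+1})-\gamma^{\mathcal{P}}(s_j)|^p$ is convex in $s_j$ and is maximized when $s_j$ reaches a neighbouring point of $\mathcal{Q}$ (in which case it may be deleted) or a breakpoint of $\mathcal{P}$. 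Iterating, the supremum is attained over partitions $\mathcal{Q}\subseteq\mathcal{P}$, on which $\gamma^{\mathcal{P}}$ agrees with $\gamma$, giving the clean bound $\|\gamma^{\mathcal{P}}\|_p\le\|\gamma\|_p$. (b) Splitting: write each straddling increment as a right-partial segment, plus a full increment of $\gamma$ between consecutive breakpoints of $\mathcal{P}$, plus a left-partial segment, and apply $(a+b+c)^p\le 3^{p-1}(a^p+b^p+c^p)$; your observation that $\sum_j\lambda_{i,j}^p\le\bigl(\sum_j\lambda_{i,j}\bigr)^p\le 1$ for $p\ge1$ controls all partial pieces segment by segment, while the middle pieces are increments of $\gamma$ over disjoint intervals with endpoints in $\mathcal{P}$, yielding $\|\gamma^{\mathcal{P}}\|_p^p\le 2\cdot 3^{p-1}\|\gamma\|_p^p$ --- a worse constant, but all the interpolation step requires. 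With either of these inserted, your proof is complete.
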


Using this approximation technique, statements about integrals against functions in $\mathcal{V}^p(\R^d)$ for $p < 2$ may be reduced to questions about the classical Stieltjes integral, as it and the Young integral are identical for piecewise linear functions.  In our case, we want to ensure that we may find an approximating sequence which is simple.  The proof of the following lemma is due to Laurence Field \cite{lf}.

\begin{lemma}\label{simpleLem}
Let $\gamma:[0,1] \rightarrow \C$ be a simple curve.  Then for any $\epsilon > 0$, there exists a partition $\mathcal{P}$ with the mesh of $\mathcal{P}$ less than epsilon and $\gamma^{\mathcal{P}}$ simple.
\end{lemma}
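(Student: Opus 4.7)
The plan is to construct the partition by perturbing a uniform partition of mesh less than $\epsilon$, using that the set of perturbations yielding a self-intersecting piecewise linear approximation is closed with empty interior. Since $\gamma: [0,1] \to \C$ is continuous and injective on the compact interval $[0,1]$, it is a homeomorphism onto its image, and both $\gamma$ and $\gamma^{-1}$ are uniformly continuous.

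Fix $N$ large enough that $5/(3N) < \epsilon$, and let $X \subset \R^{N-1}$ be the open set of perturbations $\mathbf{s} = (s_1, \ldots, s_{N-1})$ with $|s_k| < 1/(3N)$. For each $\mathbf{s} \in X$, the perturbed partition $\mathcal{P}(\mathbf{s})$ with points $t_k(\mathbf{s}) = k/N + s_k$ (setting $s_0 = s_N = 0$) has mesh less than $\epsilon$. Let $B \subset X$ be the set of perturbations for which $\gamma^{\mathcal{P}(\mathbf{s})}$ is not simple. The lemma follows once we show that $B$ has empty interior in $X$: since $B$ is a finite union of closed sets, its complement is then a dense open subset of $X$, and any perturbation in this complement gives a partition with the desired properties.

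The set $B$ decomposes as a finite union $\bigcup_{(i,j)} B_{i,j}$ over distinct chord pairs, where $B_{i,j}$ collects perturbations for which chords $C_i(\mathbf{s})$ and $C_j(\mathbf{s})$ overlap in more than a commonly shared endpoint. Fix a non-adjacent pair $|i - j| \ge 2$ and any $\mathbf{s} \in B_{i,j}$; by simplicity of $\gamma$, the subarcs $\gamma([t_{i-1}(\mathbf{s}), t_i(\mathbf{s})])$ and $\gamma([t_{j-1}(\mathbf{s}), t_j(\mathbf{s})])$ are disjoint compact sets. The condition $C_i(\mathbf{s}) \cap C_j(\mathbf{s}) \ne \emptyset$ places $\gamma(t_i(\mathbf{s}))$ inside a closed wedge $K$ with apex $\gamma(t_{i-1}(\mathbf{s}))$, bounded by the two rays through the endpoints of $C_j(\mathbf{s})$. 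Perturbing $s_i$ moves $\gamma(t_i(\mathbf{s}))$ along a non-trivial sub-arc of $\gamma$, and whenever this sub-arc exits $K$ we obtain arbitrarily small perturbations that escape $B_{i,j}$. The adjacent case $|i - j| = 1$ reduces similarly to excluding a degenerate collinearity of three consecutive vertices in which one endpoint lies on the interior of a neighboring chord, again removable by perturbation.

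The main obstacle lies in the degenerate configurations where the arc traced by $\gamma$ near $t_i(\mathbf{s})$ remains entirely inside $K$ for all nearby values of $s_i$ alone. Here one must simultaneously perturb $s_{i-1}$ (moving the apex of $K$) or $s_{j-1}, s_j$ (moving the chord $C_j$ itself), and verify, using the compactness and simplicity of $\gamma$, that at least one such direction of perturbation provides an escape from $B_{i,j}$. This finite-dimensional general-position argument, together with its adjacent-chord analogue, constitutes the technical heart of the proof.
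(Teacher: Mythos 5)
Your argument has a genuine gap, and I believe the strategy itself cannot be repaired in the form you set it up. The entire proof rests on the claim that the bad set $B$ has empty interior in $X$, and you explicitly defer the key step (``at least one such direction of perturbation provides an escape from $B_{i,j}$'') to an unproven ``general-position argument.'' But this claim is false in the relevant generality: if two non-adjacent chords $C_i(\mathbf{s})$ and $C_j(\mathbf{s})$ cross \emph{transversally} in their interiors, that is an open condition on the four endpoints $\gamma(t_{i-1}),\gamma(t_i),\gamma(t_{j-1}),\gamma(t_j)$, and since $\mathbf{s}\mapsto\gamma(t_k(\mathbf{s}))$ is continuous, an entire neighborhood of $\mathbf{s}$ lies in $B_{i,j}$. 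No small perturbation of the partition points removes an honest transversal crossing; perturbation arguments can only kill degenerate (tangential or endpoint) intersections. Your wedge argument implicitly assumes the offending intersection can be pushed off by moving $\gamma(t_i)$ out of the cone $K$, but when $C_i$ crosses $C_j$ in the middle, $\gamma(t_i)$ sits well inside $K$ and the short sub-arc swept out by varying $s_i$ over $|s_i|<1/(3N)$ need not leave it. Nor can you dismiss this configuration as non-occurring: two disjoint arcs of a simple curve may lie at a distance far smaller than their diameters (interleaved spirals, for example), in which case their chords cross transversally for the uniform partition and for all nearby perturbations, for every $N$ in a prescribed range. So the method would at best need a completely different choice of base partition, which is where all the difficulty lives.

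The mechanism that actually works, and the one used in the paper, is not generic position but a greedy continuation argument: one builds the partition inductively, maintaining the invariant that the curve after the current time $s$ never revisits the polygonal path already constructed (condition $\gamma^{-1}(\eta)\subseteq[0,s]$). Each new partition point is chosen as the \emph{last} time $\gamma$ meets the segment toward the next target point, which restores the invariant and makes it impossible for any later chord to touch an earlier one. A supremum/continuity argument (using uniform continuity of $\gamma$ and $\gamma^{-1}$, as you correctly invoke) then shows this process reaches $t=1$ with mesh below $\epsilon$. I would encourage you to rebuild the proof around an invariant of that kind rather than around perturbation.
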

\begin{proof}
We prove a slightly stronger fact that there exists a partition $\mathcal{P}$ such that for each not only is the mesh of $\mathcal{P}$ smaller than $\epsilon$, but so is $|\gamma(t_{i})-\gamma(t_{i-1})|$ for all $i$.  Since $\gamma$ is simple, it is a homeomorphic to its image, and thus both $\gamma$ and $\gamma^{-1}$ are uniformly continuous.  Thus, we may find $\delta_1<\epsilon$, $\delta_2<\epsilon$, $\delta_3$, and $\delta_4$ so that
\begin{align*}
	|t-s| < \delta_1 & \implies |\gamma(t) - \gamma(s)| < \epsilon, \\
	|\gamma(t) - \gamma(s)| < \delta_2 & \implies |t-s| < \delta_1, \\
	|t-s| < \delta_3 & \implies |\gamma(t) - \gamma(s)| < \delta_2, \text{ and}\\
	|\gamma(t) - \gamma(s)| < \delta_4 & \implies |t-s| < \delta_3. \\
\end{align*}

Let $\Sigma$ be the set of all times $s \in [0,1]$ with times $0 = t_0 < t_1 < \ldots < t_n = s$ so that
\begin{itemize}
	\item $t_i - t_{i-1} < \epsilon$ and $|\gamma(t_{i})-\gamma(t_{i-1})| < \epsilon$ for all $i$,
	\item The curve $\eta$ obtained by concatenating the segments between $\gamma(t_{i-1})$ and $\gamma(t_i)$ is simple, and
	\item $\gamma^{-1}(\eta) \subseteq [0,s]$.
\end{itemize}

We first show that $\sup \Sigma \in \Sigma$.  Suppose not, and take times $0 = t_0 < t_1 < \cdots < t_n$ as above with $|\gamma(t_n) - \gamma(\sup \Sigma)| < \delta_4$.  By shortening the sequence of times $t_i$, we may assume that $\gamma(t_k)$ is closest to $\gamma(\sup \Sigma)$ when $k = n$.  By the choice of $\delta_4$, we know that $|t_n-\sup \Sigma| < \delta_3$.  Let $t_{n+1}$  be the maximum time $s$ so that $\gamma(s)$ is contained in the interval between $\gamma(t_n)$ and $\gamma(\sup \Sigma)$.  $t_{n+1}$ is at least $\sup \Sigma$ in size, so we will be done as long as $t_{n+1} \in \Sigma$.  The first condition is satisfied since since $|\gamma(t_{n+1})-\gamma(t_n)|<\delta_2<\epsilon$ and thus $t_{n+1}-t_n<\delta_1<\epsilon$.  The second condition holds since a non-trivial intersection of the final interval with any previous one would force either non-simplicity of $\gamma$ or a violation of the third condition for the curve up to time $t_n$.  The third condition holds by the definition of $t_{n+1}$.

We now show that $\sup \Sigma = 1$.  Suppose not, and take times $0 = t_0 < t_1 < \cdots < t_n = \sup \Sigma$ as above.  Take any $t \in (t_n, t_n+\delta_3)$ so that $\gamma(t_n)$ is closer to $\gamma(t)$ than it is to any of the intervals of $\eta$ not containing $\gamma(t)$.  As before, let $t_{n+1}$ be the maximum time $s$ such that $\gamma(s)$ is contained in the interval from $\gamma(t_n)$ and $\gamma(t)$ -- a time no smaller than $t$.  Analogously to before, one may readily check that this shows $t_{n+1} \in \Sigma$.  
\end{proof}

\subsection{Partial Expected Signature for $\SLE$}
Once iterated integrals are defined, we may understand the \emph{signature}, which is the fundamental object of study in rough path theory (See \cite{rough1} for a more detailed introduction to this field of study).  Of particular interest when dealing with random processes is the expected signature of the path \cite{expectedRough,expectedBrownianRough}.  

In this section we provide a computation of the first few gradings of the expected signature for $\gamma$, an $\SLE_\kappa$ from $0$ to $1$ in the disk of radius $1/2$ about $1/2$, which we denote by $D$.

Let $\gamma_1,\gamma_2 : [0,1] \rightarrow \R$ denote the real and imaginary components of $\gamma : [0,1] \rightarrow \C$ respectively and define the \emph{coordinate iterated integrals} as
\begin{align*}
\gamma^{k_1k_2\ldots k_n} & \ceq \limints_{0 <t_1 < t_2 < \ldots < t_n<1} \dd \gamma_{k_1}(t_1) \dd \gamma_{k_2}(t_2) \cdots \dd \gamma_{k_n}(t_n)\\
& = \int_0^1 \int_0^{t_n} \int_0^{t_{n-1}} \cdots \int_0^{t_2} \dd \gamma_{k_1}(t_1) \dd \gamma_{k_2}(t_2) \cdots \dd \gamma_{k_{n-1}}(t_{n-1})\dd \gamma_{k_n}(t_n)
\end{align*}
defining $\gamma^\emptyset \ceq 1$.  It is convenient to let $\mathbf{k} = k_1k_2 \ldots k_n$ denote the multi-index used above.  

An important computational tool when dealing with these iterated integrals is the notion of the \emph{shuffle product}, as defined in the following proposition.  We say a permutation $\sigma$ of $r+s$ elements is a $\emph{shuffle}$ of ${1, \ldots, r}$ and ${r+1, \ldots , r+s}$ if $\sigma(1) < \cdots < \sigma(r)$ and $\sigma(r+1) < \cdots < \sigma(r+s)$.

\begin{proposition}[{\cite[Theorem 2.15]{rough1}}]
Let $\gamma$ be in $\mathcal{V}^p(\R^d)$ for $p < 2$. Then
\[
\gamma^{k_1\ldots k_r} \cdot \gamma^{k_{r+1}\ldots k_{r+s}} = \sum_{\text{shuffles $\sigma$}} \gamma^{k_{\sigma^{-1}(1)}\ldots k_{\sigma^{-1}(r+s)}}.
\] 
\end{proposition}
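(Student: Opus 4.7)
The plan is to reduce the identity to the piecewise linear case by a density argument and then verify it by decomposing the product of simplices according to shuffles.

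First, by Proposition~\ref{approxPath}, I can produce a sequence of piecewise linear approximations $\gamma^{\mathcal{P}_n}$ converging to $\gamma$ in $\mathcal{V}^q$ norm for some $q$ with $p < q < 2$. The continuity assertion in Theorem~\ref{youngThm} says that the Young integral depends continuously on both integrand and integrator in the respective variation norms. Applied inductively, this shows that each coordinate iterated integral $\gamma^{\mathbf{k}}$ of a fixed multi-index $\mathbf{k}$ is a continuous function of $\gamma \in \mathcal{V}^q(\R^d)$: at each level the intermediate integrand lies in $\mathcal{V}^q(\R)$ (since $q < 2$ keeps us in the Young regime where $1/q + 1/q > 1$) and converges in the $\mathcal{V}^q$ norm, so the next iteration converges as well. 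Consequently, both sides of the claimed identity are continuous functions of $\gamma$ in $\mathcal{V}^q(\R^d)$, and it suffices to prove the identity for piecewise linear paths.

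For a piecewise linear path $\eta$, all of the iterated integrals reduce to classical Lebesgue integrals over simplices. Writing $\Delta_r = \{(t_1, \ldots, t_r) : 0 < t_1 < \cdots < t_r < 1\}$ and similarly $\Delta_s$, the product of iterated integrals is
\[
\eta^{k_1 \ldots k_r} \cdot \eta^{k_{r+1} \ldots k_{r+s}} = \int_{\Delta_r \times \Delta_s} \prod_{i=1}^{r} \dd \eta_{k_i}(t_i) \prod_{j=1}^{s} \dd \eta_{k_{r+j}}(u_j).
\]
Up to a set of Lebesgue measure zero where some $t_i$ coincides with some $u_j$, the product $\Delta_r \times \Delta_s$ decomposes as a disjoint union of cells indexed by shuffles $\sigma$: the cell associated to $\sigma$ consists of those configurations $(t_i, u_j)$ for which the combined order type of the $r+s$ times is encoded by $\sigma$ (i.e. the rank of $t_i$ in the merged sequence is $\sigma(i)$, and the rank of $u_j$ is $\sigma(r+j)$). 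Each such cell is itself an $(r+s)$-dimensional simplex, and after relabelling the combined times in increasing order the integral over it is precisely $\eta^{k_{\sigma^{-1}(1)} \ldots k_{\sigma^{-1}(r+s)}}$. Summing over all shuffles gives the identity.

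The main obstacle is carrying out the first step cleanly: while Theorem~\ref{youngThm} directly yields continuity of a single Young integration, iterating it requires tracking that each intermediate integral remains of finite $q$-variation and that convergence is preserved at every level rather than deteriorating with depth. This relies crucially on the hypothesis $p < 2$, which guarantees we can choose a single $q < 2$ working at all levels at once. Once this reduction is justified, the combinatorial decomposition of $\Delta_r \times \Delta_s$ in the second step, though geometrically suggestive, is essentially routine.
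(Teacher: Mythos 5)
Your proof is correct, but note that the paper does not prove this proposition at all --- it is quoted verbatim from \cite[Theorem 2.15]{rough1}, so there is no internal argument to compare against. The route you take (reduce to piecewise linear paths via Proposition~\ref{approxPath} and the continuity of the Young integral from Theorem~\ref{youngThm}, then decompose $\Delta_r\times\Delta_s$ into shuffle-indexed simplices) is the standard proof of the shuffle identity in the Young regime, and it is exactly the approximation strategy the paper itself deploys later in the Green's theorem step of Proposition~\ref{sigProp}, so it fits the paper's toolkit precisely; your flagged concern about propagating finite $q$-variation through the iteration is handled by the last assertion of Theorem~\ref{youngThm}, which states the iterated integral again lies in $\mathcal{V}^q$.
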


We let $\mathbf{e}_{\mathbf{k}} \ceq \mathbf{e}_{k_1} \otimes \cdots \otimes \mathbf{e}_{k_n}$ denote the basis element for formal series of tensors on the standard basis of $\R^2$ (viewed as $\C$).  Then the \emph{signature} is defined to be
\[
\Sig(\gamma) = \sum_{\mathbf{k}}\gamma^{\mathbf{k}} \mathbf{e}_{\mathbf{k}}
\]
where the sum is taken over all multi-indices $\mathbf{k}$.  The expected signature is thus
\[
\E[\Sig(\gamma)] = \sum_{\mathbf{k}}\E[\gamma^{\mathbf{k}}] \mathbf{e}_{\mathbf{k}}.
\]

Computing the expected signature of any process is difficult (see, for example, \cite{expectedBrownianRough} for the computation of the expected signature of Brownian motion upon exiting a disk, where the solution is found in terms a recursive series of PDE), and thus computing the full expected signature of $\SLE_\kappa$ would be a major undertaking.  We provide a computation of the first three gradings.

To do so, we use the probability that a point in $D$ is above the curve $\gamma$.  This is a well known computation in the $\SLE$ literature, and was found by Schramm in \cite{percform} for $\SLE$ in the upper half plane from $0$ to $\infty$ in terms of a hyper-geometric function.  We use a simpler form which can be found in \cite{lawlerLeft}.  Let $\lambda = \beta - 1 = 4a-2 = 8/\kappa - 2$.  Then the probability that an $\SLE_\kappa$ from $0$ to $\infty$ in $\Half$ passes to the right of a point $r_0e^{i\theta_0}$ is
\[
\phi(\theta_0) \ceq C_\kappa \int_0^{\theta_0} \sin^\lambda(t) \dt \mwhere C_\kappa^{-1} \ceq \int_0^\pi \sin^\lambda(t) \dt = \frac{\sqrt{\pi}\Gamma(\frac{\lambda+1}{2})}{\Gamma(\frac{\lambda+2}{2})}.
\]

Given a point $x+iy \in D$, let $p(x,y)$ be the probability that an $\SLE_\kappa$ from $0$ to $1$ in $D$ passes below $x+iy$, which by conformal invariance can be obtained be pre-composing the above expression with the conformal map $z \mapsto iz/(1-z)$, which maps $D$ to $\Half$ fixing $0$ and sending $1$ to $\infty$.

\renewcommand{\arraystretch}{1.4} 
\begin{table}
	\begin{tabular}{|cc|c|}
		\hline
		$\lambda$ & $\kappa$ & $A_\kappa$ \\
		\hline
		$0$ & $4$ & $\frac{1}{48}$ \\
		$1$ & $\frac{8}{3}$ & $\frac{1}{48}(6\mathcal{K}-5)$ \\
		$2$ & $2$ & $\frac{1}{4}\log(2)-\frac{1}{6}$ \\
		$3$ & $\frac{8}{5}$ & $\frac{1}{96}(54\mathcal{K}-49)$ \\
		$4$ & $\frac{4}{3}$ & $\frac{2}{3}\log(2)-\frac{11}{24}$ \\
		$5$ & $\frac{8}{7}$ & $\frac{1}{128}(150\mathcal{K}-137)$ \\
		$6$ & $1$ & $\frac{6}{5}\log(2)-\frac{199}{240}$ \\
		\hline
	\end{tabular}
	\vspace{10pt}
	\caption{A collection of values of $A_\kappa$ across a range of integer values of $\lambda$.  $\mathcal{K} \ceq \sum_{i=1}^\infty \frac{(-1)^k}{(2k+1)^2} \approx 0.91596\ldots$ denotes Catalan's constant.}
	\label{AkTable}
\end{table}

\begin{proposition}\label{sigProp}
Fix $\kappa \le 4$ and let
\begin{align*}
A_\kappa & = \frac{1}{12} - \int_D yp(x,y) \dxy \\
& = \frac{C_\kappa}{4}\left[\int_0^{\pi/2} \frac{\sin(t)-t\cos(t)}{\sin^3(t)}\cos^\lambda(t)\dt\right] - \frac{1}{24}.
\end{align*}
Then 
\[
\E[\Sig(\gamma)] = 1 + \mathbf{e}_1 + \frac{1}{2}\mathbf{e}_{11} + \frac{1}{6}\mathbf{e}_{111} + A_\kappa \mathbf{e}_{122} - 2A_\kappa \mathbf{e}_{212} + A_\kappa \mathbf{e}_{221} + \cdots
\]
where $\gamma$ is an $\SLE_\kappa$ from $0$ to $1$ in $D$.
\end{proposition}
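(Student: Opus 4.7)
The plan is to compute the expected signature grading by grading, use shuffle identities and reflection symmetry to reduce the third grading to a single scalar $A_\kappa = \E[\gamma^{122}]$, and then evaluate that scalar through Green's theorem and the explicit conformal change of variables relating $p(x,y)$ to Schramm's formula $\phi$.

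At gradings zero, one, and two the computation is essentially automatic. We have $\gamma^\emptyset=1$, and $\gamma^1=1$, $\gamma^2=0$ deterministically since both endpoints lie on the real axis; the shuffle identity then forces $\gamma^{11}=1/2$, $\gamma^{22}=0$, while the reflection symmetry of chordal $\SLE_\kappa$ in $D$ from $0$ to $1$ about the real axis sends $\gamma^{12}\mapsto-\gamma^{12}$, killing the mixed second-grading expectations. The same symmetry annihilates any third-grading term with an odd number of $2$'s, so $\E[\gamma^{112}]=\E[\gamma^{121}]=\E[\gamma^{211}]=0$, and trivially $\gamma^{111}=1/6$, $\gamma^{222}=0$. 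For the three remaining terms, the shuffle identities $\gamma^2\cdot\gamma^{12}=\gamma^{212}+2\gamma^{122}$ and $\gamma^2\cdot\gamma^{21}=2\gamma^{221}+\gamma^{212}$ combined with $\gamma^2=0$ yield the pointwise identities $\gamma^{122}=\gamma^{221}$ and $\gamma^{212}=-2\gamma^{122}$, so it suffices to determine $A_\kappa\ceq\E[\gamma^{122}]$.

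Iterating the three inner integrals explicitly gives
\[
\gamma^{221}=\int_0^1\!d\gamma_1(t_3)\int_0^{t_3}\!d\gamma_2(t_2)\int_0^{t_2}\!d\gamma_2(t_1)=\frac{1}{2}\int_0^1\gamma_2(t)^2\,d\gamma_1(t),
\]
reducing the task to computing $\E\bigl[\int_0^1\gamma_2^2\,d\gamma_1\bigr]$. To handle this, I would approximate $\gamma$ by simple piecewise-linear curves $\gamma^{\mathcal{P}}$ using Lemma~\ref{simpleLem} and Proposition~\ref{approxPath}, and close each approximation by the real segment $L$ from $1$ to $0$. Since $y\equiv 0$ on $L$, Green's theorem for (possibly non-simple) closed curves gives
\[
\int_{\gamma^{\mathcal{P}}}y^2\,dx=\oint_{\gamma^{\mathcal{P}}\cup L}y^2\,dx=-2\iint y\,W^{\mathcal{P}}(x,y)\,dx\,dy,
\]
where $W^{\mathcal{P}}$ is the winding number of $\gamma^{\mathcal{P}}\cup L$ around $(x,y)$. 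The key observation is that, since $\gamma^{\mathcal{P}}$ is a simple arc from $0$ to $1$ in $D$, it is homotopic rel endpoints in $D\setminus\{(x,y)\}$ to either the upper or lower boundary arc of $D$ according to which component of $D\setminus\gamma^{\mathcal{P}}$ contains $(x,y)$. Direct computation of the winding numbers for the two standard curves (upper/lower arc $\cup\, L$) then gives the clean formula $W^{\mathcal{P}}(x,y)=B^{\mathcal{P}}(x,y)-\mathbf{1}_{y>0}$, where $B^{\mathcal{P}}(x,y)$ is the indicator that $\gamma^{\mathcal{P}}$ passes below $(x,y)$. Passing to the $\mathcal{V}^p$-limit, taking expectations, and using $\iint_{D\cap\{y>0\}}y\,dx\,dy=1/12$, this yields
\[
A_\kappa=\tfrac{1}{12}-\iint_D y\,p(x,y)\,dx\,dy.
\]

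For the closed-form expression, I would apply the conformal map $w=iz/(1-z)$ sending $D$ to $\Half$. Writing $w=u+iv=re^{i\theta}$, one has $y=-u/(u^2+(v+1)^2)$, Jacobian $(u^2+(v+1)^2)^{-2}$, and $p(x,y)=\phi(\theta)$, converting the area integral into
\[
-\int_0^\pi \cos\theta\,\phi(\theta)\,J(\theta)\,d\theta,\qquad J(\theta)=\int_0^\infty\frac{r^2}{(r^2+2r\sin\theta+1)^3}\,dr.
\]
Setting $K(\theta)=\int_0^\infty r(r^2+2r\sin\theta+1)^{-2}\,dr$, differentiation under the integral gives the crucial identity $K'(\theta)=-4\cos\theta\,J(\theta)$. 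Integrating by parts in $\theta$ using $\phi'(\theta)=C_\kappa\sin^\lambda\theta$, $\phi(0)=0$, $\phi(\pi)=1$, and $K(0)=K(\pi)=1/2$ reduces the integral to $\int_0^\pi\sin^\lambda\theta\,K(\theta)\,d\theta$. One then computes $K(\theta)$ as an elementary combination of $\sin\theta$, $\cos\theta$, and $\theta$ via the substitution $s=r+\sin\theta$ and the reduction formula for $(s^2+\cos^2\theta)^{-2}$. The symmetry $K(\pi-\theta)=K(\theta)$ halves the $\theta$-range to $(0,\pi/2)$, and the substitution $t=\pi/2-\theta$ converts $\sin^\lambda\theta$ into $\cos^\lambda t$; after collecting terms using $\cos^2=1-\sin^2$, the integrand collapses to $\cos^\lambda t\cdot(\sin t-t\cos t)/\sin^3 t$, producing the stated formula. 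The main obstacle is the bookkeeping in this last step, where the correct integration-by-parts cancellation and the $\theta\to\pi/2-\theta$ substitution must interact delicately to recover the claimed closed form; the topological identification of $W^{\mathcal{P}}$ also requires care when $\gamma^{\mathcal{P}}$ crosses $L$ transversely, but the homotopy argument handles this case uniformly.
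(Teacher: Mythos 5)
Your proposal is correct and follows essentially the same route as the paper: reduce to $\E[\gamma^{221}]$ via the shuffle identities and reflection symmetry, evaluate $\int_0^1\gamma_2^2\,d\gamma_1$ by Green's theorem applied to simple piecewise-linear approximants, and transfer the resulting area integral to $\Half$ via $z\mapsto iz/(1-z)$ in polar coordinates. The only (correct) variations are cosmetic: you close the curve with the real segment and track winding numbers where the paper closes with the boundary arc of $D$ to obtain a simple loop, and you evaluate the radial integral by introducing the antiderivative $K(\theta)$ and integrating by parts in $\theta$ where the paper computes the inner integral $H(\theta)$ explicitly and applies Fubini to $\phi(\theta)=C_\kappa\int_0^\theta\sin^\lambda$.
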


For integer values of $\lambda$ (which includes the values $\kappa = 4$, $8/3$, and $2$) this integral can also be evaluated nearly in closed form.  Several values of $A_\kappa$ for integer $\lambda$ may be found in Table~ \ref{AkTable}.  For an understanding of the qualitative behavior of $A_\kappa$ for other values of $\kappa$, we have included a graph in Figure~ \ref{AGraph}.  

\begin{figure}[!ht]
	\labellist
	\small
	\pinlabel $A_\kappa$ [b] at 18.5 136
	\pinlabel ${\scriptstyle 0.000}$ [r] at 15 5.5
	\pinlabel ${\scriptstyle 0.005}$ [r] at 15 36.5
	\pinlabel ${\scriptstyle 0.010}$ [r] at 15 67
	\pinlabel ${\scriptstyle 0.015}$ [r] at 15 97.5
	\pinlabel ${\scriptstyle 0.020}$ [r] at 15 128.5
	\pinlabel $\kappa$ [l] at 230 5
	\pinlabel ${\scriptstyle 0}$ [t] at 18.5 3
	\pinlabel ${\scriptstyle 1}$ [t] at 70.5 3
	\pinlabel ${\scriptstyle 2}$ [t] at 122 3
	\pinlabel ${\scriptstyle 3}$ [t] at 174 3
	\pinlabel ${\scriptstyle 4}$ [t] at 226 3
	\endlabellist
	\centering
	\includegraphics[scale=1.0]{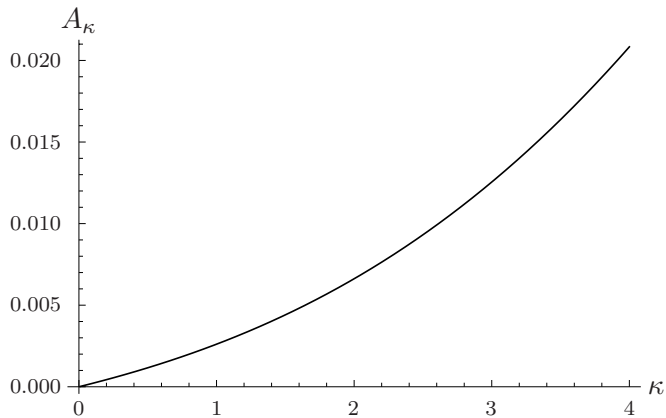}
	\caption{A graph of $A_\kappa$ as a function of $\kappa$.}
	\label{AGraph}
\end{figure}

\begin{proof}[Proof of Proposition~ \ref{sigProp}]
	First note the iterated integrals defining the signature exist since $\SLE_\kappa$ curves are in $\mathcal{V}^p(\C)$ for some $p < 2$.
	
	The initial $1$ occurs since $\gamma^\emptyset$ is defined to be $1$.  Since every $\SLE_\kappa$ is a curve from $0$ to $1$,
\[
\gamma^1 = \int_0^1 \dd \gamma_1(t) = \gamma_1(T) - \gamma_1(0) = 1, \qquad \gamma^2 = \int_0^1 \dd \gamma_2(t) = \gamma_2(T) - \gamma_2(0) = 0
\]
both with probability one, computing the first grading.  Thus, by considering these along with the shuffle products $\gamma^1 \gamma^1$, $\gamma^2 \gamma^2$, $\gamma^1\gamma^1 \gamma^1$, and $\gamma^2\gamma^2 \gamma^2$, we get the claimed values for $\gamma^{11}$, $\gamma^{22}$, $\gamma^{111}$, and $\gamma^{222}$ with probability one, and hence in expectation.

Next, the law of $\gamma$ is invariant under the map $\gamma \mapsto \bar \gamma$.  From the definition of the coordinate iterated integral, along with the definition of the Young integral one may see this implies 
\[
\E[\gamma^{k_1\ldots k_n}] = (-1)^{\#\{i \mid k_i = 2\}} \E[\bar \gamma^{k_1 \ldots k_n}]
\]
and hence any coordinate iterated integral with an odd number of imaginary components in its multi-index must have zero expectation as long as $\E[\gamma^{\mathbf{k}}]$ exists.  For the cases we need, one may repeat the Green's theorem argument which follows to show the there is some $C_{\mathbf{k}}$ so that $|\gamma^{\mathbf{k}}| \le C_{\mathbf{k}}$ with probability one and thus conclude immediately that $\E[\gamma^{\mathbf{k}}]$ exists.  Thus we have reduced the computation to the terms $\gamma^{122}$, $\gamma^{212}$, and $\gamma^{221}$.  By considering the shuffle products
\[
0 = \gamma^2\cdot\gamma^{12} = \gamma^{212} + 2\gamma^{122}, \qquad 0 = \gamma^2\cdot\gamma^{21} = 2\gamma^{221} + \gamma^{212}
\]
we need only compute $\E[\gamma^{221}]$.
	
	We use a version of Green's theorem for the Young integral.  Let $\eta$ be the concatenation of $\gamma$ with counter-clockwise arc from $1$ to $0$ along the boundary of the disk, and $A(\gamma)$ be the region enclosed within this simple loop.  In particular we wish to show that
	\[
	\int_0^1 \eta_2^2(t) \dd \eta_1(t) = -2\int_{A(\gamma)} y \dxy.
	\]
where the right integral should be understood in the Lebesgue sense.  

Given a curve $\gamma$ in $\mathcal{V}^p(\R^2)$ for some $p < 2$, by Proposition~ \ref{approxPath}, it is possible to approximate it arbitrarily well in $\mathcal{V}^q(\R^2)$ and supremum norm for some $p < q< 2$ by piecewise linear curves, which may be assumed to each be simple by Lemma~ \ref{simpleLem}.  Let $\gamma_n$ be such a sequence of approximations to the $\SLE$ curve $\gamma$.  Let $\eta_n$ be the concatenation of this piecewise linear approximation with the counter-clockwise arc from $1$ to $0$ along the boundary of the disk.  By the continuity properties of the Young integral, as stated in Theorem~ \ref{youngThm}, we know that
\[
\int_0 (\eta_n)_2^2(t) \dd (\eta_n)_1(t) \rightarrow \int_0^1 \eta_2^2(t) \dd \eta_1(t) \text{ as } n \rightarrow \infty.
\]
As these are now piecewise smooth, we know by Green's theorem that
\[
-2\int_{A(\gamma_n)} y \dxy \rightarrow \int_0^1 \eta_2^2(t) \dd \eta_1(t) \text{ as } n \rightarrow \infty.
\]
Thus, the argument is complete as long as
\[
\int_{A(\gamma_n)} y \dxy \rightarrow \int_{A(\gamma)} y \dxy \text{ as } n \rightarrow \infty.
\]
Since the $\gamma_n$ tend towards $\gamma$ in supremum norm, the desired convergence holds as long as the area within $\epsilon$ of the curve $\gamma$ tends to zero as $\epsilon$ tends to zero.  This is ensured by the almost sure order of H\"older continuity of $\gamma$, completing the proof of the required instance of Green's theorem.

One may see (by another similar approximation argument, or an application of the shuffle product $\gamma^2\gamma^2$ on the curve up to the time $t$) that
\begin{align*}
\gamma^{221} & = \int_0^1 \int_0^t \int_0^{t_2} \dd \gamma_2(t_1) \dd \gamma_2(t_2) \dd \gamma_1(t) \\
& = \frac{1}{2} \int_0^1 \gamma_2^2(t) \dd \gamma_1(t)
\end{align*}
and hence that
\[
\gamma^{221} = \frac{1}{2}\left(\frac{1}{8}\int_0^\pi \sin^3(\theta) \dd \theta - 2\int_{A(\gamma)} y \dxy\right) = \frac{1}{12} - \int_{A(\gamma)} y \dxy.
\]
Thus, all that needs to be understood is
\[
\E\left[\int_{A(\gamma)} y \dxy\right] = \int_Dy\Prob\{x+iy \in A(\gamma)\}\dxy.
\]
However, $\Prob\{x+iy \in A(\gamma)\} = p(x,y)$ and hence we have the first formula.

To obtain the more explicit formula, we need to work with the explicit definition of $p(x,y)$.  Let $g(z) = iz/(1-z)$ be the conformal map from $D$ to $\Half$ used in the definition of $p(x,y)$ and $f(w) \ceq w/(w+i)$ be its inverse.  Examining the integral, and changing variables to $\Half$ by setting $z = x+iy = f(w)$
\begin{align*}
	\int_Dyp(x,y)\dxy & = \int_D\Im(z)\phi(\arg(g(z)))\dd A(z) \\
	& = \int_\Half\Im(f(w))\phi(\arg( w))|f'(w)|^2\dd A(w) \\
	& = \int_\Half\Im(w/(w+i))\phi(\arg( w))|w+i|^{-4}\dd A(w)
\end{align*}
Changing to polar coordinates yields
\begin{align*}
	\lefteqn{\int_\Half\Im(f(w))\phi(\arg(w))|w+i|^{-4}\dd A(w)}\qquad \\
	& = -\int_0^\pi \phi(\theta)\cos(\theta)\int_0^\infty \frac{r^2}{(r^2+1+2r\sin\theta)^3}\dd r \dd \theta \\
	& = \int_0^{\pi/2} (1-2\phi(\theta))\cos(\theta)\int_0^\infty \frac{r^2}{(r^2+1+2r\sin\theta)^3}\dd r \dd \theta
\end{align*}
where the last line follows by the symmetries of $\sin$, $\cos$ and $\phi$.

A lengthy computation shows that the inner integral can be computed exactly.  The result is
\begin{align*}
	H(\theta) & \ceq \cos(\theta)\int_0^\infty \frac{r^2}{(r^2+1+2r\sin\theta)^3}\dd r \\
	& = \frac{(2\sin^2(\theta)+1)(\frac{\pi}{2} - \theta - \sin(\theta)\cos(\theta))}{8\cos^4(\theta)} - \frac{\tan(\theta)}{4}.
\end{align*} 
It will be convenient to later reparametrize, so note that
\[
H\Big(\frac{\pi}{2}-\theta\Big) = \frac{3\theta -2\theta\sin^2(\theta)-3\cos(\theta)\sin(\theta)}{8\sin^4(\theta)}.
\]
By inserting the definition of $\phi(\theta)$, reorganizing, and applying Fubini's theorem 
\begin{align*}
	\lefteqn{\int_0^{\pi/2} (1-2\phi(\theta))H(\theta) \dd \theta}\qquad \\
	& = 2C_\kappa \int_0^{\pi/2}  H(\theta) \int_\theta^{\pi/2} \sin^\lambda (t) \dt\dd \theta \\
	& = 2C_\kappa \int_0^{\pi/2}  H\Big(\frac{\pi}{2} - \theta\Big) \int_0^{\theta} \cos^\lambda (t) \dt\dd \theta \\
	& = 2C_\kappa \int_0^{\pi/2} \cos^\lambda (t) \int_t^{\pi/2}  H\Big(\frac{\pi}{2} - \theta\Big)\dd \theta   \dt.
\end{align*}
One may again compute the inner integral exactly and obtain
\[
\int_t^{\pi/2}  H\Big(\frac{\pi}{2} - \theta\Big)\dd \theta = \frac{1}{8}\left(1-\frac{\sin(t) - t\cos(t)}{\sin^3(t)}\right).
\]
Substituting this back in to the integral in question, and rearranging yields
\[
\int_Dyp(x,y)\dxy = \frac{1}{8}-\frac{C_\kappa}{4}\int_0^{\pi/2}\frac{\sin(t)-t\cos(t)}{\sin^3(t)}\cos^\lambda(t)\dt.
\]
\end{proof}

\section{Annulus crossing probabilities}\label{crossings}

In this section we prove our regularity result by providing the bound on annulus crossing probabilities for $\SLE$.

\subsection{Notation and Topology}\label{notation}

We let $B_r(z)$ denote the closed ball of radius $r$ around $z$, and $C_r(z)$ denote the circle of radius $r$ around $z$.

Let $A_r^R(z)$ denote the open annulus with inner radius $r$ and outer radius $R$ centered on $z$.  Let $\gamma:[0,\infty] \rightarrow \D$ be a chordal $\SLE$ from $1$ to $-1$ in the unit disk, considered under the standard capacity parametrization, and let $D_t$ be the component of $\D \setminus \gamma[0,t]$ which contains $-1$.

We wish to understand the probability that $\gamma$ crosses $A_r^R(z)$ $k$ times.  Fixing an annulus $A_r^R(z)$, let $\mathcal{C}_k =\mathcal{C}_k(z;r,R)$ denote the set of simple curves from $1$ to $-1$ that crosses the annulus precisely $k$ times.

To be precise in our definition of crossing, we define the following set of recursive stopping times.  In all the definitions, the infimums are understood to be infinity if taken over an empty set.  Let $\tau_0 = \inf \{t > 0 \mid \gamma(t) \not\in A_r^R(z)\}$.  This is the first time that the $\SLE$ is not contained within the annulus.  In the case that the annulus is bounded away from $1$, this time is zero.

We now proceed recursively as follows.  Assuming $\tau_i < \infty$, let $L_i$ be the random variable taking values in the set $\{I,O\}$ where 
\[
L_i = \begin{cases}
I & \gamma(\tau_i) \in B_r(z), \\
O & \gamma(\tau_i) \in \overline{\D \setminus B_R(z)}.
\end{cases}
\]
This random variable encodes the position of the curve at $\tau_i$, taking the value $I$ if it in inside the annulus, and $O$ if it is outside. 

Assuming $\tau_i < \infty$, define
\[
\tau_{i+1} = \begin{cases}
\inf\{ t > \tau_i \mid \gamma(t) \in B_r(z) \} & L_i = O, \\
\inf\{ t > \tau_i \mid \gamma(t) \in \overline{\D \setminus B_R(z)} \} & L_i = I.
\end{cases}
\]
In words, $\tau_{i+1}$ is the first time after $\tau_i$ that the curve $\gamma$ completes a traversal from the inside of the annulus to the outside, or from the outside of the annulus to the inside.  By continuity of $\gamma$, we know $\tau_{i+1} > \tau_i$.  We call the times $\tau_i$, for $i \ge 1$, \emph{crossing times}.  In this notation, $\mathcal{C}_k$ is precisely the set of curves such that $\tau_k < \infty$ and $\tau_{k+1} = \infty$.

Let $\sigma_i = \sup\{t < \tau_i \mid \gamma(t) \not \in A_r^R(z)\}$, which is to say the last entrance time of $\gamma$ in to the annulus, before crossing.  These are not stopping times, but it is useful to have them to aid in our definitions. We call the curve segments $\gamma_i \ceq \gamma[\sigma_i,\tau_i]$ \emph{crossing segments}. An illustration of the definitions so far are given in Figure~ \ref{initialDefs}.

\begin{figure}[!ht]
	\labellist
	\small
	\pinlabel $z$ [lt] at 50 50
	\pinlabel $\sigma_1$ [t] at 22 22
	\pinlabel $\tau_1$ [lt] at 48 30
	\pinlabel $\sigma_2$ [lt] at 32 60
	\pinlabel $\tau_2$ [rt] at 10 49
	\pinlabel $\sigma_3$ [l] at 74 82
	\pinlabel $\tau_3$ [lt] at 65 63
	\pinlabel $\sigma_4$ [lb] at 37 65
	\pinlabel $\tau_4$ [l] at 20 76
	\pinlabel $\sigma_5$ [l] at 43 89
	\pinlabel $\tau_5$ [t] at 45 69
	\pinlabel $\sigma_6$ [t] at 56 69
	\pinlabel $\tau_6$ [r] at 62 88
	\pinlabel $\sigma_7$ [r] at 14 69
	\pinlabel $\tau_7$ [l] at 34 62
	\pinlabel $\sigma_8$ [tr] at 70 48
	\pinlabel $\tau_8$ [tl] at 90 52
	\pinlabel $A_r^R(z)$ [c] at 70 30
	\pinlabel $\gamma$ [rb] at 20 90
	\endlabellist
	\centering
	\includegraphics[scale=2.5]{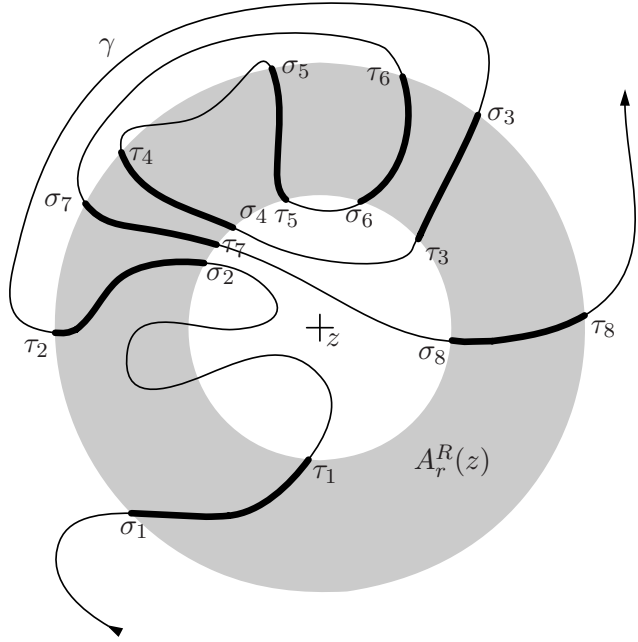}
	\caption{An illustration of the definitions given so far.  This picture should be understood as all strictly contained within $\D$.  All points along the curve $\gamma$ are labeled by the time the curve crosses the point, not by the point itself.  The crossing segments are indicated in bold.}
	\label{initialDefs}
\end{figure}

When we wish to estimate the probability that $\SLE$ performs various crossings, we will need some way of telling which crossings will require a decrease in probability.  For instance, in Figure~ \ref{initialDefs}, a crossing as between $\sigma_8$ and $\tau_8$ cannot be of small probability given the curve up to the time $\tau_7$ since the curve must leave the annulus in order to reach $-1$.  As we will see in Section~ \ref{crossingBounds}, the right way to handle this is to keep track of the \emph{crossing distance} of the tip to $-1$, which we denote by $\Delta_t$.

We define this notion as follows.  Let $E_t$, the set of extensions of $\gamma[0,t]$, denote the set of simple curves $\eta:[0,\infty] \rightarrow \D$ so that $\eta$ agrees with $\gamma$ up to time $t$ with $\eta(\infty) = -1$.  We then define
\[
\Delta_t = \min \{ k \ge 0 \mid \mathcal{C}_k \cap E_t \neq \emptyset\} - \#\{k > 0 \mid \tau_k \le t\},
\]
which is to say the minimum number of crossings needed after time $t$ to be consistent with the curve up to time $t$.  As an example, the sequence of values of $\Delta_{\tau_i}$ for $0 \le i \le 8$ from Figure~ \ref{initialDefs} are $(0,1,0,1,2,3,2,1,0)$.  By considering such examples, we 
quickly arrive at the following lemma.

\begin{lemma}\label{distLemma}
Given the above definitions, the followings statements all hold when $-1 \not \in A_r^R(z)$:
\begin{enumerate}
	\item $\Delta_t$ is integer valued and non-negative,
	\item given $t_1 < t_2$ so that there is no $i\ge 1$ with $t_1 < \tau_i \le t_2$, we have $\Delta_{t_1} = \Delta_{t_2}$, and
	\item $|\Delta_{\tau_{i+1}} - \Delta_{\tau_i}| = 1$.
\end{enumerate}
\end{lemma}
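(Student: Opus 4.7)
The plan is to first reformulate $\Delta_t$ geometrically. Any extension $\eta \in E_t$ is the concatenation of $\gamma[0,t]$ (contributing $\#\{k > 0 : \tau_k \le t\}$ crossings) with a simple tail from $\gamma(t)$ to $-1$ inside the closed slit domain $\overline{D_t}$, so
\[
\Delta_t = \min\bigl\{k \ge 0 : \text{some simple arc in } \overline{D_t} \text{ from } \gamma(t) \text{ to } -1 \text{ makes } k \text{ crossings of } A_r^R(z)\bigr\}.
\]
With this reinterpretation, Part (1) is essentially immediate: $\Delta_t$ is the minimum of a non-empty set of non-negative integers, where non-emptiness uses that $D_t$ is simply connected (since $\gamma$ is simple for $\kappa \le 4$) and $-1 \notin A_r^R(z)$, so there is always a simple arc in $\overline{D_t}$ from $\gamma(t)$ to $-1$ with only finitely many crossings.

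For Part (2), set $f(t) = \min\{k : \mathcal{C}_k \cap E_t \ne \emptyset\}$; since $\#\{k > 0 : \tau_k \le t\}$ is constant on $[t_1,t_2]$ by hypothesis, I only need to show $f(t_1) = f(t_2)$. The inclusion $E_{t_2} \subseteq E_{t_1}$ gives $f(t_1) \le f(t_2)$ immediately. Conversely, given an optimal $\eta \in E_{t_1}$, the sub-arc $\gamma[t_1,t_2]$ is simple in $\overline{D_{t_1}}$ and by hypothesis makes no crossings, so the tail of $\eta$ after time $t_1$ can be homotoped off $\gamma(t_1, t_2]$ on the appropriate side to produce a simple arc in $\overline{D_{t_2}}$ from $\gamma(t_2)$ to $-1$ with the same crossing count, yielding $f(t_2) \le f(t_1)$.

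For Part (3), I first establish $|\Delta_{\tau_{i+1}} - \Delta_{\tau_i}| \le 1$ via explicit constructions. The bound $\Delta_{\tau_i} \le \Delta_{\tau_{i+1}} + 1$ follows by concatenating $\gamma[\tau_i, \tau_{i+1}]$ (one crossing) with an optimal tail from $\gamma(\tau_{i+1})$, while the reverse bound comes from a path that first shadows $\gamma[\tau_i, \tau_{i+1}]$ in reverse on the $D_{\tau_{i+1}}$-side (contributing one crossing) and then picks up an optimal tail from $\gamma(\tau_i)$, suitably offset to preserve simplicity. To upgrade $\le 1$ to $= 1$, I invoke a parity argument: any simple arc between two points of $\overline{D_t}$ crosses the annulus a number of times whose parity is determined by whether its endpoints lie on the same or opposite sides of $A_r^R(z)$, so $\Delta_t \pmod{2}$ is pinned by the side of $\gamma(t)$ relative to $-1$. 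Because $L_i$ and $L_{i+1}$ lie on opposite sides, this parity flips at $\tau_{i+1}$, so $\Delta_{\tau_{i+1}} - \Delta_{\tau_i}$ is odd and thus exactly $\pm 1$.

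The main obstacle is making the ``homotope off'' and ``shadow'' steps of Parts (2) and (3) rigorous, since an optimal tail can meet $\gamma[t_1,t_2]$ or $\gamma[\tau_i,\tau_{i+1}]$ in a complicated closed set, and preserving simplicity while perturbing requires some care. The cleanest route is probably to pass through a uniformization of $D_t$ onto $\Half$, where the components of $A_r^R(z) \cap D_t$ become a controlled family of crosscuts and the manipulations reduce to elementary statements about simple planar arcs crossing a disjoint union of Jordan arcs.
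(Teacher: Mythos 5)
Your proposal follows essentially the same route as the paper's proof: part (1) directly from the definition, part (2) via the inclusion $E_{t_2}\subseteq E_{t_1}$ in one direction and a splice-and-perturb surgery on an optimal extension in the other, and part (3) via the two one-sided bounds together with the parity/alternation argument forced by $-1\not\in A_r^R(z)$. The paper's write-up is no more rigorous than yours at the delicate step (it likewise perturbs a retraced curve ``by openness of $D_{t_2}$'' rather than via uniformization), so your proposal is a faithful match.
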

\begin{proof}
The first statement is immediate from the definition.  To prove the second, we proceed by showing that each side is an upper bound for the other.

First, $\#\{k > 0 \mid \tau_k \le t_1\} = \#\{k > 0 \mid \tau_k \le t_2\}$ since there is no $\tau_i$ with $t_1 < \tau_i \le t_2$.  Thus $\Delta_{t_2} \ge \Delta_{t_1}$ since every element of $E_{t_2}$ is an element of $E_{t_1}$.

To see the opposite inequality, we will take an element of $E_{t_1} \cap \mathcal{C}_{\Delta_{t_1}}$ and produce an element of $E_{t_2} \cap \mathcal{C}_{\Delta_{t_1}}$, thus proving the opposite inequality.  Let $\eta$ be such a curve in $E_{t_1} \cap \mathcal{C}_{\Delta_{t_1}}$.  Let $t^* = \max\{t \ge t_1 \mid \eta(t) \in \gamma[t_1,t_2]\}$, and let $t_* = \max\{t \le t_2 \mid \gamma(t) = \eta(t^*)\}$.  Both of these exist by compactness and continuity of $\gamma$ and $\eta$. We construct $\eta'$ as follows. First, follow the curve $\gamma$ up to time $t_2$, then follow the curve $\gamma$ backwards from $t_2$ until time $t_*$, and then follow $\eta$ from $t^*$ until it reaches $-1$.  

The curve $\eta'$ is not an element of $E_{t_2}$ since it retraces its path in reverse between $t_2$ and $t_*$, however otherwise it is simple.  By openness of $D_{t_2}$, we may perturb the curve so that after $t_2$, rather than retrace $\gamma$ exactly, it follows a similar path in $D_{t_2}$ which eventually continues as $\eta$, but still never crosses $A_r^R(z)$, yielding a curve $\eta''$ (see Figure~ \ref{topFig1} for an illustration of this process in an alternate case, which we will use later, where $t_1 = \tau_i$ and $t_2 = \tau_{i+1}$).  This curve does not have any more crossings than $\eta$ by construction, but also can have no fewer by our choice of $\eta$. thus $\eta''$ is the desired element in $E_{t_2} \cap \mathcal{C}_{\Delta_{t_1}}$.

We now prove the third item.  First note that $\Delta_{\tau_{i+1}} \ge \Delta_{\tau_i} - 1$ since $E_{\tau_{i+1}}$ is contained in $E_{\tau_{i}}$ and $\#\{k > 0 \mid \tau_k \le \tau_{i+1}\} = \#\{k > 0 \mid \tau_k \le \tau_{i}\}+1$.

By the same construction as above, we may take a curve $\eta \in E_{\tau_{i+1}}$ and produce a curve $\eta'' \in E_{\tau_{i}}$ with at most two more crossing of the annulus than $\eta$.  Thus $\Delta_{\tau_{i+1}} \le \Delta_{\tau_i} + 1$.

We can complete proof of the lemma as long as we can show that $\Delta_{\tau_{i+1}} \neq \Delta_{\tau_{i}}$.  However, this follows immediately since $-1 \not \in A_r^R(z)$ and hence the pairity of $\Delta_{\tau_{i}}$ must alternate (since we we know which boundary of $A_r^R(z)$ must be passed through last and $\gamma(\tau_i)$ alternates which boundary it is contained in by definition).
\end{proof}

\begin{figure}[!ht]
	\labellist
	\small
	\pinlabel $z$ [br] at 25 25
	\pinlabel $z$ [br] at 75 25
	\pinlabel $\tau_i$ [lt] at 46 26
	\pinlabel $\tau_i$ [lt] at 96 26
	\pinlabel $\tau_{i+1}$ [lt] at 30 30
	\pinlabel $\tau_{i+1}$ [lt] at 80 30
	\pinlabel $\gamma$ [r] at 26 10
	\pinlabel $\gamma$ [r] at 76 10
	\pinlabel $\eta$ [bl] at 45 40
	\pinlabel $\eta''$ [bl] at 95 40
	\endlabellist
	\centering
	\includegraphics[scale=2.5]{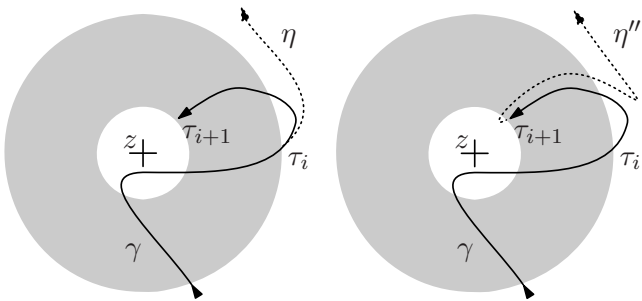}
	\caption{An example of the construction of $\eta''$ from $\eta$.}
	\label{topFig1}
\end{figure}

Note that everything besides the third bullet point in the above lemma would hold for any annulus, including those containing $-1$.  This issue will return later when our main estimate will need an extended proof when $-1 \in A_r^R(z)$.  

As $\Delta_t$ is constant on away from the crossing times, we often suppress the exact dependence on time, and let $\Delta_i \ceq \Delta_{\tau_i}$. As they will play a special role in the proof, we call the times $\tau_{i+1}$ such that $\Delta_{i+1} = \Delta_{i}+1$ the \emph{times of increase}, and the times such that $\Delta_{i+1} = \Delta_{i} - 1$ the \emph{time of decrease}.  We may further refine our understanding of the times of increase with the following lemma.

\begin{lemma}\label{increaseLemma}
Fix an annulus $A_r^R(z)$ not containing $-1$.  Let $\xi_1$ be the arc of $D_{\tau_{i}} \cap (C_r(z) \cup C_R(z))$ which contains $\gamma(\sigma_{i+1})$, and $\xi_2$ be the arc of $D_{\tau_{i}} \cap (C_r(z) \cup C_R(z))$ which contains $\gamma(\tau_{i+1})$.  Then, $\tau_{i+1}$ is a time of increase if and only if $\xi_1$ separates $\gamma(\tau_i)$ and $-1$ from $\xi_2$ in $D_{\tau_{i}}$.
\end{lemma}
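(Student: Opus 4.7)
The plan is to recast the statement as a topological question about the simply connected slit domain $D_{\tau_i}$ and the arcs cut out of $C_r(z) \cup C_R(z)$ by the curve, and then analyze how this arc structure changes when the crossing segment $\gamma[\sigma_{i+1}, \tau_{i+1}]$ is appended.

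First, I would establish a topological formula for $\Delta_i$. The reroute-and-perturb construction from the proof of Lemma~\ref{distLemma}(2) shows that any extension $\eta \in E_{\tau_i}$ may be deformed within $D_{\tau_i}$ without changing its annulus crossing count, so $\Delta_i$ depends only on the combinatorics of $(D_{\tau_i}, \gamma(\tau_i), -1, (C_r(z) \cup C_R(z)) \cap D_{\tau_i})$. Each arc of $(C_r(z) \cup C_R(z)) \cap D_{\tau_i}$ has both endpoints on $\partial D_{\tau_i}$ and, by simple connectedness, separates $D_{\tau_i}$ into two pieces. A Jordan curve argument would then identify $\Delta_i$ as an explicit function of which arcs separate $\gamma(\tau_i)$ from $-1$ together with how arcs of $C_r$ and arcs of $C_R$ are paired to form the annular regions of $D_{\tau_i} \cap A_r^R(z)$.

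Second, I would compare the arc structure of $D_{\tau_{i+1}}$ to that of $D_{\tau_i}$. The piece $\gamma[\tau_i, \sigma_{i+1}]$ stays outside $A_r^R(z)$, so it does not subdivide any arc, although it may cut off lakes that delete arcs wholesale. The crossing portion $\gamma[\sigma_{i+1}, \tau_{i+1}]$ enters $A_r^R(z)$ through $\xi_1$ at $\gamma(\sigma_{i+1})$ and exits through $\xi_2$ at $\gamma(\tau_{i+1})$, so it splits each of $\xi_1$ and $\xi_2$ into two sub-arcs in $D_{\tau_{i+1}}$ (with $\xi_1 \neq \xi_2$ since they lie on different circles). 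A case analysis, based on whether $\xi_1$ separates $\{\gamma(\tau_i), -1\}$ from $\xi_2$ in $D_{\tau_i}$, would then show that the new topological data produces a net increase of one separating arc (relative to $\gamma(\tau_{i+1})$ and $-1$ in $D_{\tau_{i+1}}$) in the first case and a net decrease of one in the second. Combined with Lemma~\ref{distLemma}(3), which gives $|\Delta_{i+1} - \Delta_i| = 1$, this determines the sign.

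The main obstacle will be the combinatorial bookkeeping in this case analysis: besides $\xi_1$ and $\xi_2$, there may be many other arcs of $(C_r(z) \cup C_R(z)) \cap D_{\tau_i}$ contributing to the count, and some of them may be absorbed into swept-off lakes in passing to $D_{\tau_{i+1}}$. One must verify that only the contributions of the sub-arcs of $\xi_1$ and $\xi_2$ actually change, and that the net change matches the separation dichotomy; alignment with the parity constraint (namely, that $\gamma(\tau_i)$ alternates between the two circles) should make the dichotomy clean once the bookkeeping is set up.
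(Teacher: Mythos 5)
Your plan diverges from the paper's argument: the paper proves both directions by explicit curve surgery (given an optimal extension $\eta$, it reroutes $\eta$ along $\xi_1$, or retraces $\gamma$ backwards from $\tau_{i+1}$ to $\sigma_{i+1}$ and perturbs, to produce extensions whose crossing counts differ from $\eta$'s by a controlled amount), whereas you propose to first express $\Delta_i$ as an explicit combinatorial invariant of the arc structure of $(C_r(z)\cup C_R(z))\cap D_{\tau_i}$ and then track how that invariant changes across the crossing. The difficulty is that this combinatorial formula --- the heart of your argument --- is never actually stated, and establishing it is not easier than the lemma itself: the number of arcs separating $\gamma(\tau_i)$ from $-1$ does not equal the minimal number of annulus traversals (a path can cross an arc of $C_R(z)$ and return without traversing the annulus, and a single traversal meets arcs of both circles), so one must identify exactly which pairings of inner and outer arcs force a traversal. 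Proving that whatever formula you write down really computes $\Delta_i$ requires, for the upper bound, constructing extensions that realize the combinatorial minimum --- which is precisely the surgery the paper performs. As written, the proposal defers the main content of the lemma to an unproved claim.

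There is also a concrete error in the bookkeeping: you assert that $\gamma[\tau_i,\sigma_{i+1}]$ stays outside $A_r^R(z)$ and therefore subdivides no arcs. By the definitions, $\sigma_{i+1}$ is only the \emph{last} exit time from the annulus before $\tau_{i+1}$; between $\tau_i$ and $\sigma_{i+1}$ the curve may make arbitrarily many incomplete excursions into the annulus (entering and leaving through the same circle), each of which subdivides arcs of $C_r(z)$ or $C_R(z)$ and can disconnect components of $D_{\tau_i}\cap A_r^R(z)$. Your case analysis would therefore have to show that these excursions never change the invariant, an additional argument you have not supplied. I would recommend either proving the combinatorial formula for $\Delta_i$ in full (at which point the surgery constructions reappear) or arguing directly as the paper does.
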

\begin{proof}	
First, if $\xi_1$ separates $\xi_2$ from $-1$, every $\eta \in E_{\tau_{i+1}}$ intersects $\xi_1$ after $\tau_{i+1}$.  Let $t^* = \sup \{t \ge \tau_{i+1} \mid \gamma(t) \in \xi_1 \}$. Consider the curve $\eta' \in E_{\tau_{i}}$ which is constructed by following $\eta$ until $\sigma_{i+1}$, then following $\xi_1$ between the points $\gamma(\sigma_{i+1})$ and $\gamma(t^*)$ and then following $\eta$ again after $t^*$.  This curve has at least two fewer crossings of the annulus after the time $\tau_{i}$ than $\eta$ did, and hence by taking $\eta$ as a minimizer for the crossing distance, we see that $\Delta_{i+1} \ge \Delta_i +1$ and hence $\tau_{i+1}$ is a time of increase.

Thus we need only show the converse.  Note that $\xi_1$ always separates $\gamma(\tau_i)$ from $\xi_2$ in $D_{\tau_i}$ and thus we need only show $\xi_1$ separates $-1$ from $\xi_2$ in $D_{\tau_i}$.  We do so by showing that if $\xi_2$ is not separated from $-1$ in $D_{\tau_i}$ then $\tau_{i+1}$ must be a time of decrease.  To do so we preform a construction very similar to the previous lemma.  Take $\eta \in E_{\sigma_{i+1}}$ so that it minimizes $\Delta_{\sigma_{i+1}} = \Delta_i$.  First, note that $\eta$ may be assumed to be contained entirely in the component of $D_{\tau_i} \setminus \xi_1$ that contains both $\xi_1$ in its interior and and $-1$ in its boundary after the time $\sigma_{i+1}$ since otherwise we may follow very near to $\xi_1$ between $\sigma_{i+1}$ and $\eta$'s last crossing of $\xi_1$ and obtain a new curve that stays within the desired component and certainly has no more crossings than $\eta$.  

Now, we construct a curve $\eta' \in E_{\tau_{i+1}}$  with at most one more crossing of $A_r^R(z)$ than $\eta$.  We take $\eta'$ to be a simple curve very near to the curve formed by following $\gamma$ until time $\tau_{i+1}$ and then following the reversal of $\gamma$ back to time $\sigma_{i+1}$ and then following $\eta$ after time $\sigma_{i+1}$.  By our choice of $\eta$ to stay within $D_{\tau_{i}} \setminus \xi_1$ after $\sigma_{i+1}$, we may choose $\eta'$ to have at most one more crossing of $A_r^R(z)$ than $\eta$ (which is the crossing that occurred between $\sigma_{i+1}$ and $\tau_{i+1}$) and hence $\Delta_{i+1} \le \Delta_{i}$ showing $\tau_{i+1}$ is a time of decrease.
\end{proof}

\subsection{Crossing bounds}\label{crossingBounds}

With the above definitions, we may prove our main estimates.

First, we recall the definition of excursion measure, which is a conformally invariant notion of distance between boundary arcs in a simply connected domain (see, for example, \cite{Lbook}).  Let $D$ be a simply connected domain and let $V_1,V_2$ be two boundary arcs. As it is all we use, we assume the boundary arcs are $C^1$. If it were needed, conformal invariance would allows us to extend this definition to arbitrary boundaries. Let $h_D(z)$ denote the probability that a brownian motion started at $z$ exits $D$ through $V_2$.  Then the \emph{excursion measure} between $V_1$ and $V_2$ is defined to be
\[
\exc_D(V_1,V_2) = \int_{V_1} \bd_{\mathbf{n}} h_D(z) \adz
\]
where $\bd_{\mathbf{n}}$ denotes the normal derivative.  

Given a pair of disjoint simple $C^1$ curves $\xi_1,\xi_2 : (0,1) \rightarrow D$ in $D$, then we write $\exc_D(\xi_1,\xi_2)$ for the excursion measure between $\xi_1$ and $\xi_2$ in the unique component of $D\setminus (\xi_1(0,1), \xi_2 (0,1))$ which has both $\xi_1$ and $\xi_2$ on the boundary.

To relate this to probabilities involving $\SLE$, we need a lemma which can be found in \cite[Lemma 4.5]{twoPoint}.  The statement here is slightly modified from the version there, but the proof follows immediately from an application of the monotonicity of excursion measure.

\begin{lemma}\label{otherLem}
There exists a $c > 0$ so the following holds.  Let $D$ be a domain, and let $\gamma$ be a chordal $\SLE_\kappa$ path from $z_1$ to $z_2$ in $D$.  Let $\xi_1,\xi_2 : (0,1) \rightarrow $ be a pair of curves so that $\xi_i(0^+)$ and $\xi_i(1^-)$ are both in $\bd D$ so that $\xi_1$ separates $\xi_2$ from $z_1$ and $z_2$.  Then
\[
\Prob\{\gamma[0,\infty] \cap \xi_2(0,1) \neq \emptyset\} \le c \; \exc_D(\xi_1,\xi_2)^\beta
\]
where $\beta = 4a-1$.
\end{lemma}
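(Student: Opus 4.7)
The plan is to reduce the statement to the unmodified form of the estimate in \cite[Lemma 4.5]{twoPoint} by invoking the monotonicity of excursion measure. In the original formulation, the role of the curve $\xi_1$ is played by a boundary arc $V \subseteq \bd D$ cut out by the endpoints of $\xi_2$ on the same side as $z_1$ and $z_2$, yielding a bound of the form
\[
\Prob\{\gamma[0,\infty] \cap \xi_2(0,1) \neq \emptyset\} \le c \; \exc_D(V,\xi_2)^\beta.
\]
The content of our statement is then that one is free to replace $V$ with any curve $\xi_1$ that still separates $\xi_2$ from the SLE endpoints.

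To finish, it therefore suffices to establish the excursion measure inequality
\[
\exc_D(V,\xi_2) \le \exc_D(\xi_1,\xi_2),
\]
after which raising to the $\beta$-th power completes the proof. This is the standard monotonicity of excursion measure under domain restriction: because $\xi_1$ separates $\xi_2$ from $V$ in $D$, every Brownian excursion from $\xi_2$ to $V$ must first cross $\xi_1$. Decomposing such an excursion at its first hit of $\xi_1$ and applying the strong Markov property writes $\exc_D(V,\xi_2)$ as the product of an excursion measure from $\xi_2$ to $\xi_1$ in the component $D^*$ of $D \setminus \xi_1$ containing $\xi_2$, with a harmonic-measure factor from $\xi_1$ to $V$ in the opposite component of $D \setminus \xi_1$ (whose total mass is at most $1$). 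By the convention introduced earlier in the section, the first factor is precisely $\exc_D(\xi_1,\xi_2)$, and the desired inequality follows.

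The only points requiring care are purely technical: matching the precise form of \cite[Lemma 4.5]{twoPoint} to our statement, and confirming that the $C^1$ and separation hypotheses produce a well-defined subdomain $D^*$ with $\xi_1$ as a smooth boundary crosscut so that the normal-derivative integrals in the definition of excursion measure are valid. Both are routine in the geometric setting of our intended application, in which $\xi_1$ and $\xi_2$ are subarcs of concentric circles in a slit domain arising from $\D \setminus \gamma[0,\tau_i]$.
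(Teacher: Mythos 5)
Your proposal matches the paper's treatment: the paper gives no proof beyond remarking that the statement is a slight modification of \cite[Lemma 4.5]{twoPoint} whose proof ``follows immediately from an application of the monotonicity of excursion measure,'' which is exactly the reduction you carry out (and you supply the correct direction of the inequality, $\exc_D(V,\xi_2)\le\exc_D(\xi_1,\xi_2)$, via the strong Markov decomposition at the first hit of $\xi_1$). Your argument is correct and, if anything, more detailed than the paper's.
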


We apply this lemma when $\xi_1$ and $\xi_2$ are arcs contained in $\bd A_r^R(z)$, and hence we wish to bound the size of the excursion measure between two such arcs.  We use the Beurling estimate (see, for example, \cite[Theorem 3.76]{Lbook}) as the main tool in providing this bound.  Given a Brownian motion $B_t$, let $\tau_{\D} = \inf\{ t > 0 \mid B_t \not \in \D\}$.

\begin{theorem}[Beurling estimate]\label{beurling}
	There is a constant $c < \infty$ such that if $\gamma : [0,1] \rightarrow \C$ is a curve with $\gamma(0) = 0$ and $|\gamma(1)| = 1$, $z \in \D$, and $B_t$ is a Brownian motion, then
	\[
	\Prob^z\{B[0,\tau_\D] \cap \gamma[0,1] = \emptyset\} \le c |z|^{1/2}.
	\]
\end{theorem}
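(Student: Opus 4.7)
The plan is to invoke the classical Beurling projection theorem to reduce the problem to a radial slit, and then to estimate the resulting escape probability using the square-root conformal map.

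First, I would replace $\gamma[0,1]$ by a connected component $K \subseteq \overline{\D}$ of $\gamma[0,1] \cap \overline{\D}$ that joins $0$ to the unit circle; such a component exists by continuity of $\gamma$ because the curve starts inside $\D$ and ends on $\bd \D$. Shrinking the set only increases the avoidance probability, so it is enough to bound the probability that the Brownian motion avoids $K$. Let $K^{*}$ denote the circular projection of $K$ onto the segment $[-1,0]$, explicitly $K^{*} = \{-r : 0 \le r \le 1, \ C_r(0) \cap K \neq \emptyset\}$. The Beurling projection theorem gives
\[
\Prob^{z}\{B[0,\tau_{\D}] \cap K = \emptyset\} \;\le\; \Prob^{|z|}\{B[0,\tau_{\D}] \cap K^{*} = \emptyset\}.
\]
Since $K$ is connected and joins the origin to $\bd \D$, the projection $K^{*}$ is the whole segment $[-1,0]$. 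It therefore suffices to prove the bound in the reduced case $\gamma[0,1] = [-1,0]$ with the Brownian motion started at the positive real number $|z|$.

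Second, for this reduced case I would apply the conformal map $\phi(w) = \sqrt{w}$ (principal branch) on $\D \setminus [-1,0]$. This map sends the slit disk onto the open right half-disk $H = \{u+iv \in \D : u > 0\}$, carries the unit circle onto the right half of $\bd \D$, identifies the two sides of the slit $[-1,0]$ with the vertical diameter $\{iv : -1 \le v \le 1\}$, and sends $|z|$ to $\sqrt{|z|}$. By conformal invariance of Brownian motion (up to time change), the escape probability equals the harmonic measure in $H$, from the point $\sqrt{|z|}$, of the circular portion of $\bd H$. Since $\sqrt{|z|}$ lies at distance $\sqrt{|z|}$ from the vertical diameter, a short harmonic-measure estimate (for instance, composing with a further conformal map onto the upper half-plane and using that the hitting probability of a bounded boundary interval from a point at distance $\delta$ from the real axis is $O(\delta)$) yields an upper bound of the form $c \sqrt{|z|}$.

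The principal obstacle is the Beurling projection inequality itself, which is a nontrivial circular rearrangement statement for harmonic measure. Its proof, via Hardy--Littlewood circular symmetrization combined with an analysis of Dirichlet energies or harmonic majorants, is worked out in the reference \cite{Lbook} supplied by the paper, so I would take it as a black box and focus the written argument on the short conformal-map computation in the second step. Combining the two steps produces the bound $\Prob^{z}\{B[0,\tau_{\D}] \cap \gamma[0,1] = \emptyset\} \le c |z|^{1/2}$ with a universal constant $c$.
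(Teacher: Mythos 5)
This theorem is not proved in the paper at all: it is a quoted classical result, cited to \cite[Theorem 3.76]{Lbook}, so there is no in-paper argument to compare against. Your proposal is correct and is essentially the standard textbook derivation (the one given in the cited reference): reduce to the radial slit $[-1,0]$ via the Beurling projection theorem, then compute the harmonic measure of the circular arc in the slit disk using the square-root map, which places the starting point at distance $\sqrt{|z|}$ from the straight part of the boundary of the half-disk and yields the bound $c|z|^{1/2}$. Two small remarks. First, the connected-component step is unnecessary: since $t \mapsto |\gamma(t)|$ is continuous with $|\gamma(0)|=0$ and $|\gamma(1)|=1$, the intermediate value theorem already shows that the circular projection of $\gamma[0,1]\cap\overline{\D}$ is all of $[-1,0]$, and monotonicity of the avoidance probability under shrinking the set is all you need. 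Second, be aware that the entire analytic content of the estimate is concentrated in the Beurling projection theorem, which you (reasonably) take as a black box; with that granted, the rest of your argument is complete and the constant is universal as claimed.
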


Combining this with a number of standard Brownian motion estimates (see, for example \cite{bmbook}), we may provide the following estimate.

\begin{lemma}\label{exchEstimate}
There exists a $c< \infty$ so the following holds.  Let $r < R/16$, $\gamma:(0,1) \rightarrow A_r^R(z_0)$ be a curve with $\gamma(0^-) \in C_r(z_0)$ and $\gamma(1^+) \in C_R(z_0)$, and $U = A_r^R(z_0) \setminus \gamma(0,1)$. Let $\xi_1$ be an open arc in $C_r(z_0)$ subtending an angle $\theta_1$ and $\xi_2$ be an open arc in $C_R(z)$ subtending an angle $\theta_2$.  Then
\[
\exc_U(\xi_1,\xi_2) \le c \; \theta_1\theta_2\Big(\frac{r}{R}\Big)^{1/2}.
\]
\end{lemma}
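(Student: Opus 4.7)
The plan is to bound the excursion measure via its definition $\exc_U(\xi_1, \xi_2) = \int_{\xi_1} \bd_{\mathbf{n}} h_U(z)\, |dz|$ by establishing the pointwise estimate $\bd_{\mathbf{n}} h_U(z) \le C\theta_2/\sqrt{rR}$ on $\xi_1$. Since $h_U$ vanishes on $\xi_1$, this is equivalent to proving $h_U(z_\epsilon) \le C\theta_2\epsilon/\sqrt{rR}$ for the interior normal translate $z_\epsilon = z + \epsilon \mathbf{n}$ and small $\epsilon > 0$, after which integrating over the arclength $r\theta_1$ of $\xi_1$ produces the claimed $\theta_1\theta_2(r/R)^{1/2}$ bound.

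To establish the pointwise estimate, I would decompose a Brownian trajectory from $z_\epsilon$ using the two intermediate circles $C_{2r}(z_0)$ and $C_{R/2}(z_0)$ and apply the strong Markov property, producing three factors each corresponding to a different geometric mechanism. The first factor, $C\epsilon/r$, arises from the \emph{local} escape from the inner boundary: for $z_\epsilon$ at distance $\epsilon$ from $C_r(z_0)$, the probability of reaching $C_{2r}(z_0)$ before $C_r(z_0)$ equals $\log(1+\epsilon/r)/\log 2 \le C\epsilon/r$ by the explicit harmonic measure of $A_r^{2r}(z_0)$. The second factor, $C(r/R)^{1/2}$, is a \emph{Beurling barrier} estimate: starting from any point $w_1 \in C_{2r}(z_0)$, the probability of reaching $C_{R/2}(z_0)$ without hitting the connected set $C_r(z_0) \cup \gamma$ is at most $C(r/R)^{1/2}$, since this obstacle contains a point at distance $r$ (the nearest point of $C_r(z_0)$) and a point at distance $\ge R/2$ (namely $\gamma(1^+) \in C_R(z_0)$, using the hypothesis $r < R/16$). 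The third factor, $C\theta_2$, is a Poisson-kernel bound: from any point on $C_{R/2}(z_0)$, the probability of exiting $U$ through $\xi_2$ is dominated, via domain monotonicity (enlarging $U$ to $B_R(z_0)$ only removes killing on $C_r(z_0)$ and $\gamma$, so exits through $\xi_2 \subset C_R(z_0)$ can only become more likely), by the harmonic measure of $\xi_2$ in $B_R(z_0)$ from a point at distance $\ge R/2$ from $C_R(z_0)$, which is $\le C\theta_2$ by an explicit Poisson-kernel computation.

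The main obstacle is the second factor. The version of Beurling quoted in Theorem~\ref{beurling} is stated only for a curve from the origin to the unit circle, while our obstacle $C_r(z_0) \cup \gamma$ is a general connected set in the plane. What is needed is the standard generalization: if $K \subset \C$ is connected and contains points $a$ and $b$ with $|a - z_*| \le s$ and $|b - z_*| \ge S$, then Brownian motion from $z_*$ reaches $\{|w-z_*|=S\}$ without hitting $K$ with probability at most $C(s/S)^{1/2}$. This follows from Theorem~\ref{beurling} by rescaling $B_S(z_*)$ to the unit disk and reducing (via a short connectedness argument) to the case where $K$ contains a simple arc joining $a$ to a point of $\partial B_S(z_*)$. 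With this generalized Beurling estimate in hand, the three-phase decomposition combines by the strong Markov property to give $h_U(z_\epsilon) \le C\theta_2(\epsilon/r)(r/R)^{1/2} = C\theta_2\epsilon/\sqrt{rR}$, completing the proof.
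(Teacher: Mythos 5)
Your proposal is correct and follows essentially the same route as the paper: the same three-stage decomposition at $C_{2r}(z_0)$ and $C_{R/2}(z_0)$, with the gambler's-ruin factor $\epsilon/r$, the Beurling factor $(r/R)^{1/2}$, and the Poisson-kernel factor $\theta_2$, combined by the strong Markov property and integrated over the arclength $r\theta_1$ of $\xi_1$. The only cosmetic difference is in the middle step, where the paper applies Theorem~\ref{beurling} directly to the curve $\gamma$ in the annulus $A_{4r}^{R/2-2r}(\gamma(0^-))$ rather than invoking the connected-set generalization for $C_r(z_0)\cup\gamma$ as you do; both are valid.
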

\begin{proof}
We bound $h_U((r+\epsilon)e^{i \theta})$ by splitting into three steps: the probability the Brownian motion reaches radius $2r$ (providing the bound needed to take the derivative), the probability it reaches radius $R/2$ (providing the dependence on $r/R$), and finally the probability it hits $\xi_2$ if it reaches radius $R$ (providing dependence on $\theta_2$).  Integrating this bound over $\xi_1$ provides the desired bound on the excursion measure.

First, if the Brownian motion is to reach $\xi_2$, it must reach $C_{2r}(z)$.  By considering the gambler's ruin estimate applied to a Brownian motion motion started at $(r+\epsilon)e^{i\theta}$ in the annulus $A_r^{2r}(z_0)$, the probability that the Brownian motion reaches $C_{2r}(z_0)$ is at most
\[
\frac{\log (r+\epsilon) - \log (r)}{\log (2r) - \log (r)} = \log\Big(1+\frac{\epsilon}{r}\Big) \le \frac{\epsilon}{r}.
\]

Second, to estimate the probability that the Brownian motion travels from $C_{2r}(z_0)$ to $C_{R/2}(z_0)$ avoiding $\gamma$, we apply the Beurling estimate (Theorem~ \ref{beurling}).  This yields the bound of $c(r/R)^{1/2}$ by considering the curve in the annulus $A_{4r}^{R/2-2r}(\gamma(0^-))$ (which is non-degenerate, and has a ratio of radii comparable to $r/R$ since we assumed $r < R/16$).

Finally, we wish to estimate the probability that a Brownian motion starting on $C_{R/2}(z_0)$ hits an arc subtending an angle of $\theta_2$ located on $C_R(z_0)$.  By an explicit computation with the Poisson kernel in $\D$, we obtain that this probability is bounded above by $c\theta_2$.

Using the strong Markov property, we may combine these estimates yielding
\[
h_U((r+\epsilon)e^{i \theta}) \le c \; \epsilon \; \frac{1}{r} \theta_2 \Big(\frac{r}{R}\Big)^{1/2} , \quad
\bd_{\mathbf{n}}h_U(re^{i\theta}) \le c \; \frac{1}{r}\theta_2 \Big(\frac{r}{R}\Big)^{1/2}
\]
and hence
\[
\exc_U(\xi_1,\xi_2) = \int_{\xi_1} \bd_{\mathbf{n}} h_U(z) \adz \le c \; \theta_1\theta_2\Big(\frac{r}{R}\Big)^{1/2}.
\]
\end{proof}

The above lemma allows us to show the occurrence of a time of increase must be paid for with a corresponding cost in probability.
\begin{proposition}\label{increaseProp}
There exists a $c > 0$ so that for $i \ge 1$
\[
\Prob\{\tau_{i+1} < \infty \ ; \ \Delta_{i+1} = \Delta_{i} + 1 \mid \F_{\tau_i}\} \le c \;  \1\{\tau_i < \infty\}\Big(\frac{r}{R}\Big)^{\beta/2}.
\]
\end{proposition}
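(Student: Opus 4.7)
My plan is to condition on $\F_{\tau_i}$ and use the domain Markov property of $\SLE$, so that on $\{\tau_i < \infty\}$ the continuation $\gamma[\tau_i,\infty]$ is a chordal $\SLE_\kappa$ from $\gamma(\tau_i)$ to $-1$ inside $D_{\tau_i}$, and the family $\{\alpha_j\} \ceq D_{\tau_i} \cap (C_r(z) \cup C_R(z))$ of boundary arcs becomes deterministic. By Lemma~\ref{increaseLemma}, the event of interest forces the continuation to re-enter the annulus through some $\xi_1 = \alpha_{j_1}$ and exit through some $\xi_2 = \alpha_{j_2}$ lying on the opposite boundary circle, with $\xi_1$ separating both $\gamma(\tau_i)$ and $-1$ from $\xi_2$ in $D_{\tau_i}$.

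I would then union-bound over admissible ordered pairs $(\xi_1,\xi_2)$. For a fixed pair, the event forces the conditional $\SLE$ to hit $\xi_2$, so Lemma~\ref{otherLem} with $\xi_1$ as the separating arc gives
\[
\Prob\{\gamma[\tau_i,\infty] \cap \xi_2 \neq \emptyset \mid \F_{\tau_i}\} \le c\,\exc_{D_{\tau_i}}(\xi_1,\xi_2)^\beta,
\]
and monotonicity of excursion measure combined with Lemma~\ref{exchEstimate} should bound this further by $c\,(\theta_1\theta_2)^\beta (r/R)^{\beta/2}$, where $\theta_i$ is the angular measure of $\xi_i$. (One may assume $r < R/16$, since otherwise $(r/R)^{\beta/2}$ is bounded below by a positive constant and the trivial bound $\Prob \le 1$ suffices.) Since $\beta = 4a - 1 \ge 1$ for $\kappa \le 4$, one has $\sum_j \theta_j^\beta \le (2\pi)^{\beta-1}\sum_j \theta_j \le (2\pi)^\beta$ separately over arcs lying in $C_r(z)$ and in $C_R(z)$, so summing $\theta_{j_1}^\beta\theta_{j_2}^\beta$ over admissible pairs is absorbed into a $\kappa$-dependent constant, giving the desired bound.

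I expect the main obstacle to be the excursion-measure comparison. $D_{\tau_i}$ is a highly irregular simply connected domain, and $\exc_{D_{\tau_i}}(\xi_1,\xi_2)$ must be compared to the quantity supplied by Lemma~\ref{exchEstimate}, which is stated for the clean annular complement of a single crossing curve. The natural approach is to pass to the component of $A_r^R(z) \cap D_{\tau_i}$ whose boundary contains $\xi_1$ and $\xi_2$, enlarge it to $A_r^R(z) \setminus \gamma_\ast$ for a single previously completed crossing segment $\gamma_\ast$ separating $\xi_1 \cup \xi_2$ from the rest of $\gamma[0,\tau_i]$ inside the annulus, and apply monotonicity, being careful about the direction of the domain inclusion. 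Once this step is justified, the domain Markov reduction, the union bound, and the $\theta^\beta$ summation are all routine bookkeeping.
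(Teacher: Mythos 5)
Your proposal is correct and follows essentially the same route as the paper: reduce to pairs of boundary arcs via Lemma~\ref{increaseLemma}, apply Lemma~\ref{otherLem} with the separating arc $\xi_1$, bound the excursion measure by monotonicity against the annulus minus a single completed crossing segment (which is exactly where $i \ge 1$ is used) via Lemma~\ref{exchEstimate}, and sum over arc pairs using $\beta \ge 1$ to control the $\theta$-dependence. The subtlety you flag about the direction of the domain inclusion in the excursion-measure comparison is resolved in the paper exactly as you suggest.
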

\begin{proof}
If $\tau_i = \infty$, the curve cannot cross again, and hence we may restrict to the complementary case.  

We wish to bound the probability that $\tau_{i+1} < \infty$ and it is a time of increase. Let $\xi_1$ and $\xi_2$ be a pair of arcs of $\bd A_r^R(z_0) \cap D_{\tau_i}$ which could contain $\sigma_{i+1}$ and $\tau_{i+1}$ respectively (by which we mean a pair of arcs with $\xi_1$ on the same component of $\bd A_r^R(z_0)$ as $\tau_i$ and $\xi_2$ on the opposite boundary component such that both arcs are in the boundary of a single component $U \in S_{\tau_i}$).  

Since $i \ge 1$ the annulus has been crossed at least once by time $\tau_i$.  Thus, $\xi_1$ and $\xi_2$ are contained in the boundary of some domain $U \in S_{\tau_i}$ which is bounded by some crossing segment $\gamma_j$.  By monotonicity of excursion measure, we may use Lemma~ \ref{exchEstimate} to conclude that
\[
\exc_{U}(\xi_1,\xi_2) \le \exc_{A_r^R(z_0) \setminus \gamma_j}(\xi_1,\xi_2) \le c \; \theta_1\theta_2 \Big(\frac{r}{R}\Big)^{1/2}
\]
where $\theta_1$ and $\theta_2$ are the angles subtended by the arcs $\xi_1$ and $\xi_2$.

Since $\tau_{i+1}$ is time of increase, Lemma~ \ref{increaseLemma} implies that $\xi_1$ separates $\xi_2$ from both $\gamma(\tau_i)$ and $-1$ as needed for Lemma~ \ref{otherLem}.  Restricting to such arcs we see 
\begin{align*}
\Prob\{\gamma[\tau_i,\infty] \cap \xi_2(0,1) \neq\emptyset \mid \F_{\tau_i} \} & \le c \; \exc_{D_{\tau_i}}(\xi_1,\xi_2)^\beta \\
& \le c \; \theta_1^\beta\theta_2^\beta \Big(\frac{r}{R}\Big)^{\beta/2} \\
& \le c \; \theta_1\theta_2 \Big(\frac{r}{R}\Big)^{\beta/2} \\
\end{align*}
where $c$ is being used generically, and the $\beta = 4a-1$ may be removed from the $\theta_i$ since $\theta_i \le 2\pi$ and $\beta \ge 1$ when $\kappa \le 4$.

We now conclude the bound by summing over all possible pairs of arcs satisfying the above criteria.  We use the extremely weak bound that perhaps \emph{every} pair of arcs, one on the interior boundary and one on the exterior boundary, might satisfy these conditions. By summing over all such pairs we see
\begin{align*}
\lefteqn{\Prob\{\tau_{i+1} < \infty \ ; \ \Delta_{i+1} = \Delta_{i} + 1 \mid \F_{\tau_i}\}}\quad \\
 & \le \sum_{\xi_1,\xi_2} \Prob\{\tau_{i+1} < \infty \ ; \ \Delta_{i+1} = \Delta_{i} + 1 \ ; \ \gamma(\sigma_{i+1}) \in \xi_1 \ ; \ \gamma(\tau_{i+1}) \in \xi_2 \mid \F_{\tau_i}\} \\
& \le \sum_{\xi_1,\xi_2} \Prob\{\gamma[\tau_i,\infty] \cap \xi_2(0,1) \neq\emptyset \mid \F_{\tau_i} \} \\
& \le c \sum_{\xi_1,\xi_2} \theta_1\theta_2 \Big(\frac{r}{R}\Big)^{\beta/2} \\
& = 4 \pi^2 c\; \Big(\frac{r}{R}\Big)^{\beta/2}.
\end{align*}
\end{proof}

The restriction that there is already at least one crossing is necessary in the above lemma.  By using the $\SLE$ Green's function (see, for example, \cite{nat1,twoPoint}), the probability of at least a single crossing is of the order $(r/R)^{2-d}$ for an annulus contained in $\D$ bounded away from $-1$ and $1$.  This is a weaker bound for $\kappa < 4$ than the one obtained above.  Additionally, an $\SLE$ must cross any annulus with $1$ and $-1$ in separate components of $\D\setminus A_r^R(z_0)$ at least once.   Since we need the bound to hold uniformly for all annuli but we do not need the exponents to be optimal, we use the trivial bound of $1$ for the first crossing.  More care must be taken in this estimate if a sharper exponent is desired.

We now obtain our bound on the number of annulus crossings by simple combinatorial estimates enforced by the form of $\Delta_i$ as a function of $i$ given in Lemma~ \ref{distLemma}.  These paths are closely related to \emph{Dyck~ paths}.  A Dyck path of length $k$ is a walk on $\N$ with $2k$ steps of $\pm 1$, which both starts and ends at zero.  Let $C_k$ denote the total number of Dyck paths of length $k$.  Since the path starts and ends at zero, there must be the same number of $+1$ steps as $-1$ steps.

\begin{theorem}\label{mainEstimate1}
There exist $c_1, c_2$ such that for any $k \ge 1$, $z_0 \in \D$, and $r < R$,
\[
\Prob\{\mathcal{C}_k(z_0;r,R)\} \le c_1 \; \Big(c_2 \frac{r}{R}\Big)^{\frac{\beta}{2}(\lfloor k/2 \rfloor - 1)}.
\]
\end{theorem}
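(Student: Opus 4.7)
The strategy is to reduce the bound on $\Prob\{\mathcal{C}_k\}$ to an iterated application of Proposition~\ref{increaseProp} combined with a combinatorial count of possible sign patterns of the walk $(\Delta_i)$. I would first work under the assumption $-1 \notin A_r^R(z_0)$, the setting covered by Lemma~\ref{distLemma}(3); the remaining case is more delicate since the walk need not alternate in parity, and I expect it requires an auxiliary argument, for instance a reduction to a sub-annulus not containing $-1$ together with some care to ensure the exponent in $r/R$ is preserved.

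First, Lemma~\ref{distLemma} lets me view $(\Delta_0, \Delta_1, \ldots, \Delta_k)$ as a nearest-neighbor walk on $\N$ with $\pm 1$ steps. Since the curve must terminate at $-1$ with no further crossings, the event $\mathcal{C}_k$ forces $\Delta_k = 0$. Combining $\Delta_0 \in \{0, 1\}$ (the two cases for whether the annulus separates $1$ from $-1$) with the parity alternation of the walk, a short case analysis shows $\Delta_1 \in \{0, 1, 2\}$, with $\Delta_1$ of the same parity as $k-1$. The number $u$ of up steps in the transitions $\tau_i \to \tau_{i+1}$ for $1 \le i \le k-1$ then satisfies
\[
u \;=\; \frac{k - 1 - \Delta_1}{2} \;\ge\; \Big\lfloor \frac{k}{2} \Big\rfloor - 1.
\]

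Next, I would bound the probability of each fixed sign pattern. For $\epsilon = (\epsilon_1, \ldots, \epsilon_{k-1}) \in \{\pm 1\}^{k-1}$, iterating the tower property together with Proposition~\ref{increaseProp} (charging a factor $c(r/R)^{\beta/2}$ for each up step and the trivial bound $1$ for each down step) yields
\[
\Prob\{\tau_k < \infty,\; \Delta_{i+1} - \Delta_i = \epsilon_i \text{ for all } i\} \;\le\; \Big(c\,(r/R)^{\beta/2}\Big)^{u(\epsilon)}.
\]
Summing over the at most $2^{k-1}$ patterns, each with $u(\epsilon) \ge \lfloor k/2 \rfloor - 1$, gives
\[
\Prob\{\mathcal{C}_k\} \;\le\; 2^{k-1}\, c^{\lfloor k/2 \rfloor - 1}\, (r/R)^{(\beta/2)(\lfloor k/2 \rfloor - 1)}.
\]
The factors $2^{k-1}$ and $c^{\lfloor k/2 \rfloor - 1}$ can be absorbed into constants $c_1, c_2$ independent of $k$ via $2^{k-1} \le 4 \cdot 4^{\lfloor k/2 \rfloor - 1}$, putting the estimate in the claimed form.

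The main obstacle, in my view, is handling the case $-1 \in A_r^R(z_0)$. Here Lemma~\ref{distLemma}(3) fails: the walk may take stay steps, and the parity-based lower bound on $u$ breaks down. I would address this by passing to a concentric sub-annulus not containing $-1$ and using monotonicity of the crossing count, with extra attention to ensuring the relevant exponent is not degraded in the final bound.
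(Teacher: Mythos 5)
Your argument for the case $-1 \notin A_r^R(z_0)$ is essentially the paper's. The paper indexes the sum by Dyck paths rather than by raw sign patterns in $\{\pm 1\}^{k-1}$, but both counts are exponential in $k$ and get absorbed into $c_1$ and $c_2$; and your count of at least $\lfloor k/2 \rfloor - 1$ up-steps among the transitions with index $i \ge 1$ --- exactly the range in which Proposition~\ref{increaseProp} applies, since that proposition needs an earlier crossing segment to separate the arcs --- is correct (in fact $\Delta_1 \le 1$ here, but allowing $\Delta_1 = 2$ only weakens your lower bound on $u$ harmlessly).

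The genuine gap is the case $-1 \in A_r^R(z_0)$. Passing to a single concentric sub-annulus avoiding $-1$ cannot recover the stated exponent: writing $r_0 = |z_0+1|$, any such sub-annulus has radii ratio at least $\min(r/r_0,\, r_0/R)$, and since $(r/r_0)(r_0/R) = r/R$ the best ratio you can guarantee is $(r/R)^{1/2}$ (attained when $r_0 \approx \sqrt{rR}$), which halves the exponent. That weaker bound would still suffice for the application to Theorem~\ref{ABTheorem}, since the exponent still tends to infinity with $k$, but it does not prove Theorem~\ref{mainEstimate1} as stated. The paper instead uses \emph{both} sub-annuli $A_r^{r_0}(z_0)$ and $A_{r_0}^{R}(z_0)$ simultaneously: a curve crossing $A_r^R(z_0)$ exactly $k$ times crosses each of them at least $k$ times, Lemma~\ref{distLemma} applies to each, and applying Proposition~\ref{increaseProp} in chronological order to $\lfloor k/2 \rfloor - 1$ times of increase for each sub-annulus yields the product $(r/r_0)^{\frac{\beta}{2}(\lfloor k/2\rfloor -1)}(r_0/R)^{\frac{\beta}{2}(\lfloor k/2\rfloor -1)} = (r/R)^{\frac{\beta}{2}(\lfloor k/2\rfloor -1)}$. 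A second point you would need to handle there is the combinatorics: the sub-annuli may be crossed far more than $k$ times, so summing over all admissible pairs of Dyck paths diverges; the paper restricts to the initial $2\lfloor k/2 \rfloor$ steps of each crossing-distance walk (which already contain $\lfloor k/2\rfloor$ up-steps), keeping the number of patterns at most $16^{\lfloor k/2 \rfloor}$.
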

\begin{proof}
Due to the topology of the situation, we must split this proof into two main cases: the case where $-1$ is not contained in $A_r^R(z_0)$ and the case where it is.
	
First, we prove the case $-1 \not\in A_r^r(z_0)$ as the second case reuses much of the same argument.  We proceed by splitting the event into those crossings which share a common sequence of values for $\Delta_i$, bounding the probability of a particular sequence by repeated application of Proposition~ \ref{increaseProp}, and then relating the number or such sequences to the number of Dyck Paths to obtain the constant.

Take some curve in $\mathcal{C}_k(z_0;r,R)$, and consider the associated $\Delta_i$ for $0 \le i \le k$.  In this case, $\Delta_k = 0$ since the curve proceeds to $-1$ without any further crossings.  Also, depending on if $\gamma(\tau_0)$ and $-1$ are in the same component of $\D \setminus A_r^R(z_0)$ or not, we have that $\Delta_0 \in \{0,1\}$ (this observation strongly uses that $\D$ is convex and hence can force at most one crossing of the annulus).  If $\Delta_0 = 1$,  $\Delta_1 = 0$ since $\gamma(\tau_1)$ would have to be the first time $\gamma$ was contained in the boundary of the component of $\D \setminus A_r^R(z_0)$ which contains $-1$. Thus by Lemma~ \ref{distLemma}, either $(\Delta_0, \ldots, \Delta_k)$ or $(\Delta_1, \ldots, \Delta_k)$ is a Dyck path of length $\lfloor k/2 \rfloor$ (and indeed since the number of steps in a Dyck path must be even, at most one of these cases can hold for any given $k$).  In either case the Dyck path must contain exactly $\lfloor k/2 \rfloor$ steps of $+1$, which is to say at least $\lfloor k/2 \rfloor$ times of increase.  

Thus, for a fixed Dyck path $\mathbf{d} = (d_0, \ldots, d_{2\lfloor k/2 \rfloor})$, we let $\mathcal{C}_{\mathbf{d}} \subseteq \mathcal{C}_k$ denote the set of curves for which either $(\Delta_0, \ldots, \Delta_k) = \mathbf{d}$ or $(\Delta_1, \ldots, \Delta_k) = \mathbf{d}$.  On this event, after discarding the first time of increase, and applying Proposition~ \ref{increaseProp} to each subsequent time of increase we obtain
\[
\Prob\{\mathcal{C}_{\mathbf{d}}\} \le c^{\lfloor k/2 \rfloor -1} \; \Big(\frac{r}{R}\Big)^{\frac{\beta}{2}(\lfloor k/2 \rfloor - 1)}.
\]

Finally, any such Dyck path $\mathbf{d}$ of length $\lfloor k/2 \rfloor$ can occur, and hence we need to sum over all of the possibilities yielding
\begin{align*}
	\Prob\{\mathcal{C}_k(z_0;r,R)\} &  = \sum_{\mathbf{d}} \Prob\{\mathcal{C}_{\mathbf{d}}\} \\
	& \le c^{\lfloor k/2 \rfloor -1} C_{\lfloor k/2 \rfloor} \; \Big(\frac{r}{R}\Big)^{\frac{\beta}{2}(\lfloor k/2 \rfloor - 1)}.
\end{align*}
Since $C_\ell \le 4^\ell$, there are universal $c_1$ and $c_2$ so that
\[
\Prob\{\mathcal{C}_k(z_0;r,R)\} \le c_1 \; \Big(c_2 \frac{r}{R}\Big)^{\frac{\beta}{2}(\lfloor k/2 \rfloor - 1)} 
\]
as needed for the first case.

In the second case, where $-1 \in A_r^R(z_0)$, we may no longer apply the above reasoning as Lemma~ \ref{distLemma} no longer holds for our annulus.  In particular, crossing the annulus need not change the number of crossings needed to reach $-1$ and hence we have no lower bound on the number of times of increase.  Let $r_0 = |z_0 + 1|$ be the distance between the center of the annulus and $-1$.  To extend the proof to this case, we split the annulus into the pair of annuli $A_I \ceq A_r^{r_0}(z_0)$ and $A_O \ceq A_{r_0}^R(z_0)$ and produce an upper bound of the exact same order by running the same argument in parallel for both annuli.

Take any curve that crosses $A_r^R(z_0)$ exactly $k$ times.  Any such curve must cross both $A_I$ and $A_O$ at least $k$ times.  Let $k_I$ be the number of times the curve crosses $A_I$ and similarly let $k_O$ be the number of times the curve crosses $A_O$.  Let $\Delta^I_i$ and $\Delta^O_i$ be the associated crossing functions.  As $-1$ is contained in neither $A_I$ nor $A_O$, Lemma~ \ref{distLemma} applies and, as above, we may associate a Dyck path of length $\lfloor k_I / 2 \rfloor$ to $\Delta^I_i$ and a Dyck path of length $\lfloor k_O / 2 \rfloor$ to $\Delta^O_i$.

We have no upper bound on $k_O$ and $k_I$ in comparison to $k$, and thus attempting to mirror the exact proof from above will not succeed since summing over all pairs of Dyck path of length at least $\lfloor k/2 \rfloor$ could yield a divergent sum since we have no control on the ratios $r/r_0$ and $r_0/R$.  Thus we must make this sum finite depending only on $k$.  Given a Dyck path of length $\ell' > \ell$ we must have at least $\ell$ times of increase in the first $2\ell$ steps of that Dyck path as otherwise the path would be negative at step $2\ell$.  Thus, as we only need $\lfloor k /2 \rfloor$ times of increase to obtain the desired bound, we need only consider the initial segments of the Dyck paths associated to $\Delta^I_i$ and $\Delta^O_i$.

Thus, for two initial segments $\mathbf{d_I}$ and $\mathbf{d_O}$ of Dyck paths both containing exactly $2\lfloor k/2 \rfloor$ steps, we let $\mathcal{C}_{\mathbf{d_I},\mathbf{d_O}} \subseteq \mathcal{C}_k$ denote the set of curves for which both either $(\Delta^I_0, \ldots, \Delta^I_k) = \mathbf{d}_I$ or $(\Delta^I_1, \ldots, \Delta^I_k) = \mathbf{d}_I$ and either $(\Delta^O_0, \ldots, \Delta^O_k) = \mathbf{d}_O$ or $(\Delta^O_1, \ldots, \Delta^O_k) = \mathbf{d}_O$ .  On this event, after discarding the first time of increase for both annuli, and applying Proposition~ \ref{increaseProp} to the next $\lfloor k/2 \rfloor - 1$ subsequent time of increase we obtain
\begin{align*}
\Prob\{\mathcal{C}_{\mathbf{d_I},\mathbf{d_O}}\} &  \le c^{2\lfloor k/2 \rfloor -2} \; \Big(\frac{r}{r_0}\Big)^{\frac{\beta}{2}(\lfloor k/2 \rfloor - 1)} \; \Big(\frac{r_0}{R}\Big)^{\frac{\beta}{2}(\lfloor k/2 \rfloor - 1)} \\
& = c^{2\lfloor k/2 \rfloor -2} \; \Big(\frac{r}{R}\Big)^{\frac{\beta}{2}(\lfloor k/2 \rfloor - 1)}.
\end{align*}

Finally, noting that there are at most $16^{\lfloor k/2 \rfloor}$ different pairs of $\mathbf{d_I},\mathbf{d_O}$, we may sum over each of these possibilities and obtain the desired bound in the same manner as the first case.
\end{proof}

To apply the results of Aizenman and Burchard, we need to have a bound on the probability of having at least $k_0$ crossings. Such bound is easily obtained from the above by summing over all $k \ge k_0$ to see that
\[
\Prob\{\gamma \textrm{ traverses $A_r^R(z_0)$ at least $k$ separate times} \} \le c_{k_0} \; \Big(\frac{r}{R}\Big)^{\frac{\beta}{2}(\lfloor k_0/2 \rfloor - 1)}
\]
where $c_{k_0}$ is some new constant depending only on $k_0$, thus completing the proof of the Theorem \ref{boundTheorem}, and hence of all results.
\section*{ Acknowledgments }
The author would like to thank Greg Lawler for helpful comments on an earlier draft of this paper, in particular for improving the explicit integral for $A_\kappa$ in Proposition~ \ref{sigProp} and Laurence Field for useful discussions.
\bibliography{AnnulusBib}{}
\bibliographystyle{plain}
\end{document}